\numberwithin{equation}{section}
\newtheorem{thm}{Theorem}[section]
\newtheorem{cor}{Corollary}[section]
\newtheorem{prop}{Proposition}[section]
\newtheorem{defn}{Definition}[section]
\newtheorem{question}{Question}[section]
\newcommand{\An}[1]{\mathbb{A}^{#1}}
\newcommand{\cF}{\mathcal{F}}
\newcommand{\cU}{\mathcal{U}}
\newcommand{\nfracxpx}{\frac{x'}{x}}
\newcommand{\nfracx}{\frac{1}{x}} 
\newcommand{\m}{\mathfrak{m}}
\newcommand{\monex}{\mathfrak{m}\{1/x\}}
\newcommand{\mx}{\mathfrak{m}\{x\}}
\newcommand{\Pn}[1]{\mathbb{P}^{#1}}
\newcommand{\Po}{xx''=x'} 
\newcommand{\Q}{\mathbb{Q}}
\begin{document}
\title[Complete differential varieties]{New examples (and counterexamples) of complete finite-rank differential varieties}
\author{William D. Simmons }
\thanks{The present paper is based on the author's doctoral thesis under the supervision of Professor David Marker. I would like to thank Dave as well as other colleagues and friends with whom I have discussed the differential completeness problem.}
\address{wsimmo@sas.upenn.edu \\
Department of Mathematics\\
University of Pennsylvania\\
David Rittenhouse Laboratory\\
209 South 33rd Street\\
Philadelphia, PA 19104-6395
}

\maketitle

\begin{abstract}
Differential algebraic geometry seeks to extend the results of its algebraic counterpart to objects defined by differential equations. Many notions, such as that of a projective algebraic variety, have close differential analogues but their behavior can vary in interesting ways. Workers in both differential algebra and model theory have investigated the property of completeness of differential varieties. After reviewing their results, we extend that work by proving several versions of a ``differential valuative criterion" and using them to give new examples of complete differential varieties. We conclude by analyzing the first examples of  incomplete, finite-rank projective differential varieties, demonstrating a clear difference from  projective algebraic varieties.
 \end{abstract}

\begin{section}{Introduction}
The fundamental theorem of elimination theory states that projective algebraic varieties over an algebraically closed field $K$ are \emph{complete}: If $V$ is such a variety and $W$ is an arbitrary variety over $K$, then the second projection map $\pi_{2}:V\times W\rightarrow W$ takes Zariski-closed sets to Zariski-closed sets. This property is tightly linked to projectiveness, as shown by the example of the affine hyperbola $xy-1=0$. The images of the projections to either axis lack 0 but contain every other point of the affine line. We must ``close up" the variety with a point at infinity to ensure a closed projection.

The geometric picture is clear and standard algebraic tools readily prove the theorem (see, for instance, \cite{shaf}). However, this is not the case with the same question in differential algebraic geometry.

A \emph{differential ring} (or $\Delta$-ring) is a commutative ring $R$ with 1 and a finite set of maps $\Delta$ such that for each $\delta\in \Delta$ (\emph{derivations} on $R$) and $x,y\in R$, $\delta(x+y)=\delta(x)+\delta(y)$ and $\delta(xy)=\delta(x)y + x\delta(y)$. 
We consider differential fields of characteristic zero with a single derivation $\delta$ as well as differential polynomial rings with coefficients belonging to a differential field. (To indicate that we are in the ordinary, rather than partial, case we write $\delta$-ring in place of $\Delta$-ring.) 

Over a differential field $F$, differential polynomial equations define closed sets (differential varieties) in the \emph{Kolchin topology}, a differential analogue of the Zariski topology on $\mathbb{A}^{n}(F)$ and $\mathbb{P}^{n}(F)$. If $I$ is a set of of differential polynomials, we write $\mathbf{V}(I)$ for the corresponding zero locus.  Given $X$ a subset of $\mathbb{A}^{n}(F)$ or $\mathbb{P}^{n}(F)$ we use the notation $\overline{X}$ to denote the Kolchin closure of $X$ (i.e., the intersection of all Kolchin-closed sets containing $X$). In projective space over $F$, the zero locus is only well defined if the corresponding differential polynomials are $\delta$-homogeneous; i.e., there is a natural number $d$ such that for all $\lambda \in F\setminus 0$, $f(\lambda \bar{x})=\lambda^{d}(f(\bar{x}))$ . A $\delta$-polynomial in the differential polynomial ring $F\{x_{0}, \dots,x_{n-1}\}$ can be $\delta$-homogenized by replacing each $x_{i}$ with $x_{i}/x_{n}$, applying the quotient rule to compute any derivatives that appear, and clearing denominators. For instance, $x_{0}'\in F\{x_{0}\}$ becomes $x_{1}x_{0}'-x_{0}x_{1}'$. When we refer to the projective closure of an affine $\delta$-variety, we mean the intersection of all projective $\delta$-varieties containing the affine one; in general the differential projective closure is a proper subset of the usual algebraic projective closure.

\begin{defn}
Let $V$ be a projective $\delta$-variety defined over a differential field $F$. We say $V$ is \emph{differentially complete} (or \emph{$\delta$-complete}) if for every projective $\delta$-variety $W$ over $F$ and Kolchin-closed subset $Z$ of $V\times W$, the second projection map $\pi_{2}: V\times W \rightarrow W$ sends $Z$ to a Kolchin-closed set.
\end{defn}

\noindent Closedness is a local property, so as in the non-differential case we may replace $W$ with affine space $\mathbb{A}^{n}$.

Differential algebraic geometry is much helped by a good analogue for algebraically closed fields, and we find one in the differentially closed fields of characteristic zero. These objects, first studied by model theorists, are the models of a first-order theory $DCF_{0}$ (in the case of $m$ commutating derivations, the corresponding theory is $DCF_{0,m}$). If $\mathcal{F}$ is such a field, we write $\mathcal{F}\models DCF_{0}$. Differentially closed fields are a natural choice for studying $\delta$-completeness. In particular they are \emph{existentially closed}, implying that if a system of differential polynomial equations with coefficients from a model of $DCF_{0}$ is solvable in an extension $\delta$-field, then it is already solvable in the base field.

Our problem, first looked at in essentially this form by E.R. Kolchin \cite{MR0352067}  and later by W.Y. Pong \cite{PongDiffComplete2000}, is this: 
\begin{question}
If $V$ is a projective $\delta$-variety defined over $F\models DCF_{0}$, is $V$ differentially complete? 
\end{question}

The answer is ``not necessarily", but Pong showed one reason for the counterexamples: $\delta$-completeness requires a variety to have \emph{finite rank} in any of the usual model-theoretic or differential algebraic senses. (See \cite{Marker:model_thy_of_fields} or \cite{marker:msri} for an overview. In the differential algebra literature the notion of differential dimension is usually taken to mean differential transcendence degree of the differential function field over the base field; being zero-dimensional is equivalent in the ordinary case to having finite Morley rank, etc.) In this paper we take the basic setup used by Pong and further develop it into an algorithmic strategy for attacking the $\delta$-completeness problem. Using this approach, we verify new examples of complete $\delta$-varieties over $DCF_{0}$. Importantly, we also analyze the first example of an incomplete finite-rank projective $\delta$-variety (introduced by the author in \cite{SimmonsThesis}); hence we know that this class is not empty.

In the next section we summarize the literature and major results around differential completeness. Refer to \cite{PongDiffComplete2000}, \cite{SimmonsThesis} for additional details.
\end{section}

\begin{section}{Further background}

The research thread we add to in this paper has two main strands. The first is differential algebraic and is comprised mainly of the papers \cite{MR0352067} of Kolchin, \cite{BlumComplete},\cite{BlumExtensions} of P. Blum, and \cite{MorrisonSD},\cite{morrison79} of Morrison. The second strand incorporates model-theoretic ingredients. In particular we need a note \cite{vandenDries} by van den Dries on positive quantifier elimination and Pong's work on differential completeness \cite{PongDiffComplete2000}.
 
Kolchin (\cite{MR0352067}) examined differential varieties in projective space and used the property of differential completeness to extend classical results about linear dependence over algebraic varieties. He also studied the notion for its own sake, showing that over a close analogue of differentially closed fields the projective closure of the constants (i.e., elements whose derivatives are all zero) is differentially complete and proving that various (infinite-rank) varieties like $\mathbb{P}^{n}$ are not differentially complete.
In \cite{PongDiffComplete2000} Pong gave explicit equations for one of the counterexamples. We explain it here in order to later contrast with our example of an incomplete finite-rank projective $\delta$-variety.

Define a Kolchin-closed subset $V$ of $\mathbb{A}^{1}\times\mathbb{A}^{1}$ with two $\delta$-polynomial equations
\[
  y(x')^{2} + x^{4}-1=0,\,\, 2yx'' + y'x' +4x^{3}=0,
\]

\noindent where $x$ corresponds to the first factor and $y$ to the second. Note that the point at infinity $(1:0)$ does not satisfy the $\delta$-homogenizations with respect to $x$ of these equations for any value of $y$, so $V$ is actually closed in $\mathbb{P}^{1}\times\mathbb{A}^{1}$. The second equation is the derivative of the first, followed by division by $x'$. It follows, using also the fact that $V$ is defined over a differentially closed field, that the image of the second projection consists of all points in $\mathbb{A}^{1}$ except 0. By irreducibility of $\mathbb{A}^{1}$ (with respect to the Kolchin topology; see Theorem B.1 in \cite{Marker:model_thy_of_fields} for a proof), the image is not Kolchin-closed. 

In a different direction, Blum and Morrison investigated differential analogues of valuation rings. The central object for our purposes is a \emph{maximal differential ring}.

\begin{defn}
Let $R$ be a $\delta$-subring of a $\delta$-field $K$. Let $f:R\rightarrow L$ be a $\delta$-ring homomorphism into a $\delta$-field $L$. (A $\delta$-ring homomorphism is a ring homomorphism $f$ from a $\delta$-ring $R_{1}$ into $\delta$-ring $R_{2}$ such that $f(\delta_{R_{1}}(x))=\delta_{R_{2}}(f(x))$ for all $x\in R_{1}$. The symbols $\delta_{R_{1}}$ and $\delta_{R_{2}}$ denote the derivations of $R_{1}$ and $R_{2}$, respectively.) We say $R$ is a maximal $\delta$-ring (with respect to $K$, or simply $K$-maximal) if $f$ does not properly extend to a $\delta$-ring homomorphism with domain a larger $\delta$-subring of $K$ and codomain a $\delta$-field.
\end{defn}

The crucial fact (\cite{BlumExtensions}) is that a $K$-maximal $\delta$-ring $R$ is a local differential ring with unique maximal $\delta$-ideal $\m$ (i.e., $\m$ is the unique maximal ideal of $R$, and $\m$ is also closed under the derivation). Being a $K$-maximal $\delta$-ring is not as strong as being a valuation ring. Nonetheless, such rings have the useful property that $x\in K\setminus R$ if and only if $1\in \mx$, where $\mx$ is the differential subring of $K$ generated by $\m$ and $x$.

Three additional results due to Morrison are critical to our work on $\delta$-completeness; they are Proposition 4 and its corollary in \cite{MorrisonSD} and Corollary 3.2 in \cite{morrison79}.

\begin{thm}
Let $R$ be a maximal differential ring of a differential field $K$, and let $x$ be an element of $K$. Suppose that $\nfracx$ is not in $R$ and that there is a linear relation 

\[
x^{(k)} =\sum_{i=0}^{k-1} a_{i}x^{(i)} +r,
\] 

\noindent where $r$ and $a_{i}\text{ }(0\leq i\leq k-1)$ are elements of $R$. Then $x\in R$.
\end{thm}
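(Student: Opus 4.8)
The plan is to prove the contrapositive in sharp form: granting the linear relation, I will show that $x\notin R$ forces $\nfracx\in R$, which directly contradicts the hypothesis $\nfracx\notin R$ and so yields $x\in R$. The two tools I will lean on are the characterization recalled above---for $z\in K$, $z\notin R$ if and only if $1$ lies in the $\delta$-rng generated by $\m$ and $z$---and the fact that $R$ is local with maximal $\delta$-ideal $\m$ and residue field $L=R/\m$.

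First I would extract the structural consequence of the relation $x^{(k)}=\sum_{i=0}^{k-1}a_i x^{(i)}+r$. Differentiating it repeatedly and using $\delta(R)\subseteq R$, an easy induction shows that every derivative $x^{(j)}$ with $j\ge k$ is an $R$-linear combination of $1,x,x',\dots,x^{(k-1)}$. Consequently the $\delta$-subring of $K$ generated by $R$ and $x$ is the finitely generated, $\delta$-stable algebra $R[x,x',\dots,x^{(k-1)}]$. If $x\notin R$ this is a $\delta$-subring of $K$ strictly larger than $R$, exactly the kind of object to which, by $K$-maximality, the structure map cannot extend as a $\delta$-homomorphism into a $\delta$-field.

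Next I translate $x\notin R$ into an explicit identity. By the characterization, $x\notin R$ gives $1\in\mx$; unwinding the definition of $\mx$ and using that $\m$ is a proper $\delta$-ideal, this produces a unit $u\in 1+\m$ and elements $a_\mu\in R$ with
\[
  u=\sum_{\deg\mu\ge 1} a_\mu\,\mu(x),
\]
the sum running over differential monomials $\mu$ in $x,x',\dots$ of positive degree. Substituting the order reductions from the previous step turns this into an honest algebraic relation $u=F(x,x',\dots,x^{(k-1)})$ with $F\in R[X_0,\dots,X_{k-1}]$. The decisive feature is that, before reduction, every term of the sum had positive degree in the $x^{(i)}$, so the only source of low-degree---and in particular constant---contributions to $F$ is the inhomogeneous part $r$ of the linear relation.

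The core of the argument is then a leading-term analysis of this relation, arranged to exhibit $\nfracx\in R$. Isolating a highest-degree part and dividing by the appropriate power of $x$, one rewrites a unit as an $R$-combination of positive powers of $\nfracx$, i.e. a (monic-after-scaling) polynomial relation witnessing that $\nfracx$ is integral over $R$; the maximality of $R$, through the integral-closedness of maximal $\delta$-rings which itself follows from the $\monex$ criterion, then places $\nfracx$ in $R$. The reciprocal case, in which it is $x$ rather than $\nfracx$ that comes out integral, is excluded precisely by the standing assumption $\nfracx\notin R$, so the two cases together force the conclusion. I expect the main obstacle to be the bookkeeping forced by the inhomogeneous term $r$: because the linear relation equates the degree-one quantities $x^{(k)},x^{(i)}$ with the degree-zero quantity $r$, the naive grading by $x$-degree is not preserved, and for $k\ge 2$ the single relation $F=u$ constrains the whole tuple $x,x',\dots,x^{(k-1)}$ rather than $x$ alone. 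The delicate point is to show that the degree-lowering contributions of $r$ and its derivatives cannot cancel the leading unit---most cleanly, I would pass to a minimal-order counterexample or work in the associated graded of the filtration in which the order-$<k$ relation becomes negligible at the top, and there read off the desired integral dependence.
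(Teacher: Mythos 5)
First, a point of orientation: the paper itself does not prove this theorem---it is quoted from Morrison (Proposition 4 of \cite{MorrisonSD}), and where it is applied (Theorem 4.1) the paper only remarks that Morrison's proof is an explicit elimination ``similar in spirit'' to its own Propositions on type I/type II equations. Measured against that machinery, your framework is sound in outline: the reduction of every $x^{(j)}$ with $j\geq k$ to an $R$-combination of $1,x,\dots,x^{(k-1)}$ is correct, as are the appeals to the characterization $z\notin R$ iff $1\in\m\{z\}$ and to the integral closedness of $R$; indeed for $k=1$ your plan is essentially a complete proof. But two defects appear already at the setup. The coefficients in your relation $u=\sum a_\mu\,\mu(x)$ must be taken in $\m$, not merely in $R$: with mere $R$-coefficients the relation carries no information (nothing prevents the unit from being absorbed trivially), and it is only because every non-unit coefficient lies in $\m$ that the eventual top coefficient is a unit while the remaining ones are small, which is what makes the final relation a genuine integral dependence. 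More seriously, you never use the actual hypothesis $\nfracx\notin R$ except as a contradiction target at the very end, whereas the Morrison-style argument starts from it: $1\in\monex$ yields an equation whose every term is a monomial in $x',x'',\dots$ of degree \emph{strictly smaller} than the power of $x$ in its denominator, and the linear reduction preserves this imbalance (substituting $\sum a_i x^{(i)}+r$ for the degree-one $x^{(j)}$ can only lower numerator degree, and cancelling numerator powers of $x$ into the denominator keeps the inequality strict). Your $\mx$-side relation has no comparable structure---after reduction and division by a top power of $x$ one gets terms $(x')^a/x^b$ with no control of $a$ against $b$---so you have chosen the side of the symmetry on which the leading-term analysis has nothing to grip.

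The genuine gap is the step you explicitly defer (``minimal-order counterexample,'' ``associated graded''): for $k\geq 2$, a single polynomial relation $u=F(x,x',\dots,x^{(k-1)})$ with $F\in\m[X_0,\dots,X_{k-1}]$ cannot yield integrality of $\nfracx$ by inspecting a highest-degree part, since one relation among $k$ quantities bounds none of them individually; and, as you concede, the inhomogeneous term $r$ breaks the grading, so the associated graded of the degree filtration forgets exactly the low-degree contributions that threaten to cancel the unit. What closes this gap---and is the actual content of Morrison's proof, mirrored in the paper's Proposition 4.3 for $\Po$---is a differential-algebraic elimination with two interacting equations: (i) extract from $1\in\monex$ the ``type I'' equation described above; (ii) generate new relations by differentiating it, substituting the linear relation, and cancelling leading terms along a well-ordering such as $lex(\deg(x'),\deg(x))$ until a second, ``type II,'' equation emerges; (iii) run the alternating two-equation algorithm (Proposition 4.2 verbatim when $k=2$, its multivariable analogue in general) to conclude $1\in\m\left[\nfracx\right]$, which after clearing denominators and dividing by the unit $1-c_0$ is a monic equation for $x$ over $R$, whence $x\in R$ by integral closedness---directly, with no contrapositive. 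Your single-equation leading-term plan supplies none of (i)--(iii), and the termination argument in (ii)--(iii) is precisely the mathematical content of the theorem, not bookkeeping.
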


\begin{cor}
Let $R$ be a maximal differential ring of a differential field $K$, and $x$ a non-zero element of $K$. If for some positive integer $k$ either $x^{(k)}$ or $x^{(k)}/x$ is in $R$, then either $x$ or $\nfracx$ is in $R$. 
\end{cor}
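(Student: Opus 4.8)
The plan is to reduce both hypotheses to the single hypothesis of the preceding Theorem, which already concludes $x \in R$ once we know that $\nfracx$ is not in $R$ and that $x^{(k)}$ satisfies a linear relation with coefficients and constant term in $R$. The first step is therefore to dispose of one half of the desired disjunction for free: if $\nfracx \in R$, then the conclusion ``either $x$ or $\nfracx$ is in $R$'' already holds, so I may assume throughout the rest of the argument that $\nfracx \notin R$. This is precisely the standing hypothesis the Theorem requires, so the entire task becomes exhibiting, in each of the two cases, a linear relation of the form $x^{(k)} = \sum_{i=0}^{k-1} a_{i} x^{(i)} + r$ with all $a_{i}, r \in R$.

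Next I would treat the two cases separately. In the case $x^{(k)} \in R$, the equation $x^{(k)} = r$ with $r := x^{(k)}$ is visibly a linear relation of the required shape, taking every $a_{i} = 0 \in R$; the Theorem then yields $x \in R$. In the case $x^{(k)}/x \in R$, I would use that $x$ is nonzero to write $x^{(k)} = (x^{(k)}/x)\, x$, which is again a relation of the required shape with $a_{0} := x^{(k)}/x \in R$, all higher $a_{i} = 0$, and $r = 0$; once more the Theorem gives $x \in R$. In both cases, combined with the trivial alternative handled at the outset, we obtain that $x \in R$ or $\nfracx \in R$.

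The only point demanding any care is the order of the logic: the Theorem can be invoked only under the assumption $\nfracx \notin R$, so the disjunction appearing in the conclusion must be introduced \emph{before} the Theorem is applied, rather than extracted from it. Beyond this bookkeeping I do not anticipate a genuine obstacle, since the corollary is essentially a specialization of the Theorem; the whole content lies in recognizing that the two scalar hypotheses, ``$x^{(k)} \in R$'' and ``$x^{(k)}/x \in R$,'' are degenerate instances of the linear relation the Theorem already handles.
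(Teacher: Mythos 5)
Your proof is correct and matches the intended argument: the paper states this corollary without proof (it is quoted from Morrison, where it follows from the preceding Theorem in exactly this way), namely by first disposing of the trivial case $\frac{1}{x}\in R$ and then, assuming $\frac{1}{x}\notin R$, reading off the degenerate linear relations $x^{(k)}=r$ with $r=x^{(k)}\in R$, or $x^{(k)}=a_{0}x$ with $a_{0}=x^{(k)}/x\in R$ (legitimate since $x\neq 0$ and the sum in the Theorem includes the $i=0$ term), so that the Theorem yields $x\in R$. Your remark about introducing the disjunction before invoking the Theorem is exactly the right bookkeeping, and there is no gap.
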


\begin{thm}
If $\phi_{0}:R\rightarrow \Omega$ is a differential specialization and if $x$ is integral over $R$, then $\phi_{0}$ extends differentially to $x$. Thus a maximal differential ring of a differential field $L$ is integrally closed in $L$.
\end{thm}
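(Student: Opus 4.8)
The plan is to prove the extension statement and then read off integral closure of maximal $\delta$-rings as an immediate corollary. For the corollary: if $R$ is $L$-maximal with defining $\delta$-homomorphism $\phi_{0}$ and $x\in L$ is integral over $R$, the extension statement produces a $\delta$-ring homomorphism on the $\delta$-subring $R\langle x\rangle\subseteq L$ (the differential ring generated by $R$ and $x$) restricting to $\phi_{0}$. Maximality forbids a proper extension, so $R\langle x\rangle=R$, i.e.\ $x\in R$; hence $R$ is integrally closed in $L$.

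For the extension statement, set $F=\mathrm{Frac}(R)$ and $\mathfrak{p}=\ker\phi_{0}$, which is a prime $\delta$-ideal of $R$ (prime because $\Omega$ is a field, a $\delta$-ideal because $\phi_{0}$ commutes with $\delta$). Since we work in characteristic zero, $x$ is \emph{separably} algebraic over $F$; let $m\in F[T]$ be its minimal polynomial and $S_{m}=m'$ its separant. Differentiating $m(x)=0$ gives $S_{m}(x)\,x'=-m^{\delta}(x)$, where $m^{\delta}$ denotes the result of applying $\delta$ to the coefficients of $m$; because $S_{m}(x)\neq 0$ this expresses $x'$, and inductively every $x^{(j)}$, as an $F$-polynomial in $x$. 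Thus $R\langle x\rangle\subseteq F(x)$ is finite over $F$ and the entire differential structure of $x$ over $R$ is rigid. After replacing $R$ by $R_{\mathfrak{p}}$ (still a $\delta$-ring, as $\mathfrak{p}$ is a $\delta$-ideal) to clear the denominators of $m$, I may assume the relevant coefficients lie in $R$.

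To build the extension I first extend the underlying ring specialization. Enlarging $\Omega$ to an algebraically (indeed differentially) closed $\delta$-field if necessary, lying-over for the integral extension $R\subseteq R[x]$ lets me pick a root $\bar{x}\in\Omega$ of the specialized polynomial $\phi_{0}(m)$ and define a ring homomorphism $R[x]\to\Omega$ by $x\mapsto\bar{x}$. I then propagate to derivatives by the only admissible rule $x^{(j)}\mapsto\delta_{\Omega}^{j}(\bar{x})$ and must verify this is a well-defined $\delta$-homomorphism on all of $R\langle x\rangle$. The differential polynomials vanishing on $x$ form the prime $\delta$-ideal $[m]\colon S_{m}^{\infty}$; for $G$ in this ideal one has $S_{m}^{k}G\in[m]$, so $S_{m}^{k}G=\sum_{\alpha}A_{\alpha}\,\delta^{j_{\alpha}}(m)$ with $A_{\alpha}$ differential polynomials over $R$. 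Specializing the coefficients and evaluating at $(\bar{x},\delta_{\Omega}\bar{x},\dots)$, each $\delta^{j_{\alpha}}(m)$ becomes $\delta_{\Omega}^{j_{\alpha}}(\phi_{0}(m)(\bar{x}))=\delta_{\Omega}^{j_{\alpha}}(0)=0$, whence $\phi_{0}(S_{m})(\bar{x})^{k}\cdot(\text{image of }G)=0$. So the assignment respects every relation provided $\phi_{0}(S_{m})(\bar{x})=\phi_{0}(m)'(\bar{x})\neq 0$.

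The crux, and the step I expect to be hardest, is the case where the specialized separant is forced to vanish: $\phi_{0}(m)$ can be a polynomial \emph{all} of whose roots are multiple (for instance $T^{2}-a$ with $a\in\mathfrak{p}$, whose reduction is $T^{2}$), so no simple root $\bar{x}$ is available and the clean cancellation above no longer applies. Here one must instead show directly that no relation among $x,x',x'',\dots$ forces a pole in the derivatives under $\phi_{0}$ — equivalently that elements such as $m^{\delta}(x)/S_{m}(x)$ remain finite at $\mathfrak{p}$. This is exactly where the hypothesis that $\phi_{0}$ is \emph{differential} is essential: the $\delta$-ideal property of $\mathfrak{p}$ sends the derivatives of every coefficient lying in $\mathfrak{p}$ back into $\mathfrak{p}$, and this is what cancels the apparent poles and makes $x^{(j)}\mapsto\delta_{\Omega}^{j}(\bar{x})$ consistent even at a multiple root. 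Making this no-pole argument precise is the technical heart of the proof; I would carry it out either by passing to the integral closure of $R$ in $F$ (which in characteristic zero is again a $\delta$-ring, so $x'$ and its successors stay integral) or by dominating $R_{\mathfrak{p}}$ by a differential valuation ring and invoking that valuation rings are integrally closed. The remaining verifications are the routine computations indicated above.
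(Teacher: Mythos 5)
The paper itself offers no proof of this statement: it is quoted verbatim as Morrison's result (Corollary 3.2 of \cite{morrison79}), so there is no in-paper argument to compare against; the question is whether your proposal stands on its own, and it does not. Your reduction of the integral-closedness corollary to the extension statement is correct and standard. But the extension statement is exactly what you leave unproved: you explicitly defer the case where the specialized separant vanishes, calling it ``the technical heart,'' and that case \emph{is} the theorem --- when $\phi_{0}(S_{m})(\bar{x})\neq 0$ the extension is the easy, essentially algebraic half. There are also flaws upstream of the crux. Localizing at $\mathfrak{p}$ does not put the coefficients of the minimal polynomial $m$ into the ring: those coefficients are integral over $R$ (symmetric functions of conjugates of $x$) but need not lie in $R_{\mathfrak{p}}$ unless $R$ is already integrally closed, which is circular here. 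Likewise the representation $S_{m}^{k}G=\sum_{\alpha}A_{\alpha}\,\delta^{j_{\alpha}}(m)$ is an identity over $F=\mathrm{Frac}(R)$; clearing denominators may introduce a factor $d\in R$ with $d\in\mathfrak{p}$, and then specializing yields $0=0$ and certifies nothing, so even your ``generic case'' verification has a hole of the same pole-cancellation kind.

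Neither of your two proposed repairs closes the gap. Route (a): Seidenberg's theorem that a derivation preserves integral closure requires Noetherian hypotheses (a Noetherian domain containing $\mathbb{Q}$), which the rings here --- in the application, maximal $\delta$-rings --- do not satisfy; and even granting that $x',x'',\dots$ are all integral over $R$, lying-over only extends $\phi_{0}$ as a \emph{ring} homomorphism to $R\langle x\rangle$, with nothing forcing the extension to commute with $\delta$. That compatibility is precisely the content of Morrison's theorem, so route (a) restates the problem rather than solving it. Route (b) founders on the central known obstruction of this subject, which the paper itself emphasizes: maximal $\delta$-rings are generally \emph{not} valuation rings, and a local $\delta$-ring need not be dominated by any valuation ring that is closed under the derivation. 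A differential Chevalley-style domination theorem of the kind you invoke does not exist; if it did, the Blum--Morrison machinery (and much of the difficulty of $\delta$-completeness) would be unnecessary. So the deferred step is not a routine verification but the actual theorem, and your proposal, as written, does not prove it.
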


(A differential specialization is a non-zero $\Delta$-homomorphism from a $\Delta$-ring containing $\Q$ into a $\Delta$-field.)

Now we turn to the model-theoretic strand. Van den Dries observed the following ``positive 	quantifier elimination" result (\cite{vandenDries}):  

\begin{thm} 
Let $T$ be an $\mathscr{L}$-theory and $\varphi(\bar{x})$ an $\mathscr{L}$-formula. Then $\varphi$ is equivalent modulo $T$ to a positive (i.e., lacking negations) quantifier-free formula if and only if for all $\mathcal{M},\mathcal{N} \models T$, substructures $A\subseteq \mathcal{M}$, $\bar{a}\in A$, and $\mathscr{L}$-homomorphisms $f:A\rightarrow \mathcal{N}$ we have $\mathcal{M}\models \varphi(\bar{a}) \implies \mathcal{N}\models \varphi(f(\bar{a}))$.
\end{thm}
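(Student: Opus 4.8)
The plan is to prove the two directions separately: the forward (preservation) implication is a routine induction, while the converse is the substantive part and will rest on a method-of-diagrams compactness argument.

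For the easy direction, suppose $\varphi$ is equivalent modulo $T$ to a positive quantifier-free formula $\psi(\bar{x})$. Given $\mathcal{M},\mathcal{N}\models T$, a substructure $A\subseteq\mathcal{M}$, a tuple $\bar{a}\in A$, and a homomorphism $f:A\rightarrow\mathcal{N}$, I would first note that since $\psi$ is quantifier-free and $A$ is a substructure of $\mathcal{M}$, the truth of $\psi(\bar{a})$ is absolute between $A$ and $\mathcal{M}$. It then suffices to show that homomorphisms preserve positive quantifier-free formulas, which I would establish by induction on the structure of $\psi$: atomic formulas (both equalities $t_{1}=t_{2}$ and relational atoms $R(\bar{t})$) are preserved because $f$ commutes with the function symbols and carries the interpretations of relations forward, and the connectives $\wedge,\vee$ preserve this property. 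Translating back through the equivalence $\varphi\equiv\psi$, which holds in both $\mathcal{M}$ and $\mathcal{N}$ since both model $T$, yields $\mathcal{M}\models\varphi(\bar{a})\implies\mathcal{N}\models\varphi(f(\bar{a}))$.

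For the converse, assume the homomorphism-preservation condition. I would introduce new constants $\bar{c}$ and let $\Gamma(\bar{c})$ be the set of all positive quantifier-free formulas $\psi(\bar{c})$ with $T\models\forall\bar{x}(\varphi\to\psi)$. The crux is the claim that $T\cup\Gamma(\bar{c})\models\varphi(\bar{c})$; once this is in hand, compactness gives a finite $\Gamma_{0}\subseteq\Gamma$ with $T\cup\Gamma_{0}\models\varphi(\bar{c})$, and then $\bigwedge\Gamma_{0}$ is a positive quantifier-free formula equivalent to $\varphi$ modulo $T$, since it implies $\varphi$ by choice of $\Gamma_{0}$ and is implied by $\varphi$ because each conjunct lies in $\Gamma$. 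To prove the claim I would argue by contradiction with a second diagram-and-compactness step: supposing $T\cup\Gamma(\bar{c})\cup\{\neg\varphi(\bar{c})\}$ has a model $\mathcal{N}$ with $\bar{a}=\bar{c}^{\mathcal{N}}$, I consider the theory $T\cup\{\varphi(\bar{c})\}\cup\{\neg\theta(\bar{c}):\theta\text{ atomic},\ \mathcal{N}\models\neg\theta(\bar{a})\}$. Were it inconsistent, compactness would produce atomic $\theta_{1},\dots,\theta_{n}$, each false of $\bar{a}$ in $\mathcal{N}$, with $T\models\forall\bar{x}(\varphi\to\theta_{1}\vee\cdots\vee\theta_{n})$; but that disjunction is positive quantifier-free, hence lies in $\Gamma$, so it would have to hold of $\bar{a}$ in $\mathcal{N}$, contradicting that every $\theta_{i}$ fails there. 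So the theory is consistent; letting $\mathcal{M}\models T$ realize it with $\bar{a}'=\bar{c}^{\mathcal{M}}$, every atomic formula true of $\bar{a}'$ in $\mathcal{M}$ is true of $\bar{a}$ in $\mathcal{N}$, so $\bar{a}'\mapsto\bar{a}$ extends to a homomorphism $f$ from the substructure $A=\langle\bar{a}'\rangle$ of $\mathcal{M}$ into $\mathcal{N}$. Since $\mathcal{M}\models\varphi(\bar{a}')$, the preservation hypothesis forces $\mathcal{N}\models\varphi(\bar{a})$, contradicting $\mathcal{N}\models\neg\varphi(\bar{a})$.

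The main obstacle will be getting the two compactness steps to interlock cleanly. The outer step reduces the desired equivalence to the single entailment $T\cup\Gamma\models\varphi$, while the inner step must manufacture a model of $\varphi$ that carries a homomorphism into $\mathcal{N}$; the delicate point is arranging the negated-atomic portion of the diagram so that it simultaneously guarantees that homomorphism (through well-definedness of $f$ on terms and forward preservation of relations) and, under the assumption of failure, collapses into a positive quantifier-free consequence of $\varphi$ that is already forced to hold in $\mathcal{N}$ by membership in $\Gamma$.
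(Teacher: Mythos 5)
Your proposal is correct, but note that the paper itself offers no proof of this statement: it is quoted verbatim from van den Dries's note \cite{vandenDries} and used as a black box, so there is no internal argument to compare against. What you have reconstructed is in essence the standard proof of the positive quantifier-free preservation theorem (and essentially the argument in the cited source): the routine induction for the preservation direction, and for the converse the two interlocking compactness steps --- the outer one reducing the problem to $T\cup\Gamma(\bar{c})\models\varphi(\bar{c})$ where $\Gamma$ is the set of positive quantifier-free consequences of $\varphi$, the inner one building from the negated atomic diagram of $\bar{a}$ in $\mathcal{N}$ a model $\mathcal{M}\models T\cup\{\varphi(\bar{a}')\}$ together with a weak homomorphism $\langle\bar{a}'\rangle\rightarrow\mathcal{N}$ (your diagram $\{\neg\theta(\bar{c}):\mathcal{N}\models\neg\theta(\bar{a})\}$ is exactly what makes $t^{\mathcal{M}}(\bar{a}')\mapsto t^{\mathcal{N}}(\bar{a})$ well defined on the generated substructure and forward-preserving on relations, matching the paper's ``weak homomorphism'' convention). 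The only gap worth flagging is degenerate: your finite conjunction $\bigwedge\Gamma_{0}$ and the inner disjunction $\theta_{1}\vee\cdots\vee\theta_{n}$ may be empty, so the argument tacitly requires $\top$ and $\bot$ (empty conjunction and disjunction) to count as positive quantifier-free formulas, or else a nondegeneracy assumption; if $T\cup\{\varphi\}$ is inconsistent and no positive quantifier-free formula is $T$-refutable, the theorem as literally stated can fail. This is harmless in the paper's setting, since its language of differential rings with constants for $\mathcal{F}$ provides $0=1$ as a positive quantifier-free falsity and $x=x$ as a truth, but a complete write-up should state the convention.
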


\noindent (The statement refers to ``weak homomorphisms"; i.e., we only assume that for a relation symbol $R$, $\mathcal{M}\models R(\bar{a})$ implies $\mathcal{N}\models R(f(\bar{a}))$ but not necessarily the reverse.) Van den Dries noted that by letting $T$ be the theory of algebraically closed fields and using basic facts about valuation rings, the result gives an easy proof of the fundamental theorem of elimination theory. 

In \cite{PongDiffComplete2000} Pong extended Kolchin's work and gave a test for $\delta$-completeness using van den Dries' positive quantifier elimination. Pong proved that every $\delta$-complete $\delta$-variety has finite rank. Intuitively, this fact means that differential varieties defined by underdetermined systems of equations always ``have enough room" to hold counterexamples like the one cited earlier for $\mathbb{P}^1$.  Pong also showed that any $\delta$-complete $\delta$-variety is isomorphic to a $\delta$-subvariety of $\An{1}$. Similarly, any finite-rank $\delta$-variety, affine or projective, maps via an injective $\delta$-variety morphism into $\An{1}$. However, unless a given variety is known beforehand to be complete, we cannot say \emph{a priori} that the image is closed in $\An{1}$. 
Using the algebraic results of Blum and Morrison to compensate for maximal differential rings' failure to be valuation rings in general, Pong applied the positive quantifier elimination criterion to prove:

\begin{thm} 
Let $V\subseteq \mathbb{A}^{n}$ be an affine $\delta$-variety defined over $DCF_{0}$.  Then the following are equivalent:

\begin{enumerate}
\item $V$ is $\delta$-complete.

\item For any $K$-maximal $\delta$-ring $R$, we have $V(K)=V(R)$ (i.e., if a tuple $\bar{a}\in \mathbb{A}^{n}$ has all of its coordinates in $K$ and $\bar{a}\in V$, then every coordinate of $\bar{a}$ belongs to $R$). 

\end{enumerate}•
\end{thm}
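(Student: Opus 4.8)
The plan is to route the entire argument through van den Dries' positive quantifier elimination (the Theorem stated above), applied to $T = DCF_0$ in the language of differential rings; since the only atomic formulas there are $\delta$-polynomial equations, a ``weak homomorphism'' is exactly a $\delta$-ring homomorphism. First I would record the geometry/logic dictionary. Since closedness is local we may take $W=\mathbb{A}^{m}$, and then a Kolchin-closed subset of $\mathbb{A}^{m}$ is precisely the zero set of a positive quantifier-free formula (a finite union of finite intersections of $\delta$-polynomial equations), while for a closed $Z\subseteq V\times\mathbb{A}^{m}$ the image $\pi_{2}(Z)$ is defined by the existential-positive formula $\varphi_{Z}(\bar w)\equiv \exists\bar v\,(\bar v\in V\wedge(\bar v,\bar w)\in Z)$. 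Because closedness may be tested in a saturated model and $DCF_{0}$ is complete, $\pi_{2}(Z)$ is Kolchin-closed for every $Z$ if and only if every $\varphi_{Z}$ is equivalent modulo $DCF_{0}$ to a positive quantifier-free formula. By van den Dries' criterion this is in turn equivalent to the following \emph{homomorphism-extension property}: for all $\cM,\cN\models DCF_{0}$, every $\delta$-subring $A\subseteq\cM$, every $\delta$-ring homomorphism $f:A\to\cN$ over the field of definition $F$, and every $\bar b\in V(\cM)$, the map $f$ extends to a $\delta$-ring homomorphism on $A\langle\bar b\rangle$. The ``only if'' here uses that the quantifier-free $\delta$-type of $\bar b$ over $A$ is finitely generated as a radical $\delta$-ideal, by the Ritt--Raudenbush basis theorem, so that a single $Z$ with finitely many parameter coordinates $\bar w$ encodes the obstruction to extending $f$; the ``if'' produces the required witness $\bar c=g(\bar b)$ directly from the extension $g$.

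Next I would prove $(2)\Rightarrow(1)$ by verifying this homomorphism-extension property from the valuative condition. Given $f:A\to\cN$ and $\bar b\in V(\cM)$, let $K$ be the $\delta$-subfield of $\cM$ generated by $A$ and $\bar b$, so $\bar b\in V(K)$. Using Zorn's lemma one extends $f$ to a $\delta$-ring homomorphism maximal among extensions whose domain is a $\delta$-subring of $K$ and whose codomain is a $\delta$-field; by \cite{BlumExtensions} the domain of this maximal extension is exactly a $K$-maximal $\delta$-ring $R$. Hypothesis $(2)$ gives $V(K)=V(R)$, and since $\bar b\in V(K)$ we get $\bar b\in V(R)$, i.e.\ every coordinate of $\bar b$ lies in $R$. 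Thus the maximal homomorphism is already defined on $\bar b$ and restricts to the desired extension of $f$ across $A\langle\bar b\rangle$. A point of bookkeeping: the maximal extension a priori lands in some $\delta$-field, which we embed in a sufficiently saturated $\cN'\succeq\cN$; because $\varphi_{Z}$ is existential and $\cN\prec\cN'$, a witness over $\cN'$ descends to one over $\cN$.

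For $(1)\Rightarrow(2)$ I would argue contrapositively, using maximality itself as an obstruction to extension. Let $R$ be $K$-maximal with maximal $\delta$-homomorphism $h:R\to L$, and let $\bar b\in V(K)$. Embedding $K$ into some $\cM\models DCF_{0}$ and $L$ into some $\cN\models DCF_{0}$, apply the homomorphism-extension property (equivalent to $(1)$ by the first paragraph) with $A=R$ and $f=h$: it extends $h$ to a $\delta$-ring homomorphism on $R\langle\bar b\rangle$, a $\delta$-subring of $K$, with codomain a $\delta$-field. Maximality of $R$ forces $R\langle\bar b\rangle=R$, whence $\bar b\in V(R)$. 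As $\bar b\in V(K)$ was arbitrary, $V(K)=V(R)$, which is $(2)$.

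The main obstacle is the first paragraph: pinning down the dictionary so that ``$\pi_{2}(Z)$ closed for all $Z$'' is faithfully matched with van den Dries' homomorphism hypothesis, and recognizing that the resulting extension problem is governed by \emph{maximal} $\delta$-rings rather than valuation rings. This is the crucial structural input, and it is exactly where the Blum--Morrison theory enters: a $K$-maximal $\delta$-ring is only a local $\delta$-ring with a unique maximal $\delta$-ideal $\m$ and need not be a valuation ring, yet it is the correct device for encoding ``all one-step extensions of a specialization'' (the property $x\in K\setminus R\iff 1\in\mx$, together with Morrison's extension and integral-closure results, compensating for this weakness when the criterion is later applied to concrete varieties). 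Once the reformulation is in place, both implications are short, since each reduces to the single question of whether a $\delta$-homomorphism extends across the coordinates of a point of $V$.
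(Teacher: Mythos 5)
Your overall architecture is sound, and your direction $(2)\Rightarrow(1)$ is essentially the paper's (and Pong's): extend $f$ by Zorn's lemma to a homomorphism whose domain is, by Blum, a $K$-maximal $\delta$-ring $R$, use $V(K)=V(R)$ to see the witness has coordinates in $R$, push it forward, and clean up with a saturated $\cN'\succeq\cN$. The genuine gap is in your first paragraph, in the ``only if'' half of the claimed equivalence between $\delta$-completeness and the homomorphism-extension property --- precisely the half your $(1)\Rightarrow(2)$ leans on. You propose to take a Ritt--Raudenbush radical basis $p_{1},\dots,p_{k}$ of the annihilator $I$ of $\bar b$ over $A$ and let a single closed $Z$ (the $p_i$ with coefficients replaced by coordinates $\bar w$) encode the obstruction. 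Completeness then yields $\bar c\in V(\cN)$ with $p_{i}^{f}(\bar c)=0$, but this does \emph{not} make $\bar b\mapsto \bar c$ a well-defined $\delta$-homomorphism on $A\{\bar b\}$: for arbitrary $p\in I$ the membership certificates $d\,p^{N}=\sum \tilde g_{l}\,\theta_{l}(p_{i_{l}})$ hold over $\operatorname{Frac}(A)$, i.e., after clearing a denominator $d\in A$, and $f$ may kill $d$. This failure is not hypothetical in your application: there $A=R$ is local and $f$ kills all of $\mathfrak{m}$, which is exactly where the denominators land. So a single $Z$ built from a radical basis does not encode the obstruction, and your $(1)\Rightarrow(2)$ as written does not go through.

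The step is repairable, but by a different mechanism: do not fix a basis in advance; instead suppose the extension fails, so $1\in[f(I)]$ (or a nonzero $d\in f(A)$ lies in $[f(I)]$), extract the \emph{finitely many} $p_{i}\in I$ appearing in that certificate, build $Z$ from those, and let completeness produce $\bar c\in V(\cN)$ annihilating the $p_{i}^{f}$; evaluating the certificate at $\bar c$ gives $1=0$. Properness of the radical of $[f(I)]$ then yields a generic zero in a $\delta$-field extension, hence the extension of $f$, and your maximality trick finishes $(1)\Rightarrow(2)$. Note, though, that the paper (following Pong; it proves the projective analogue in full) avoids this entire lemma: it argues $(1)\Rightarrow(2)$ contrapositively and concretely, using Blum's characterization that $x\in K\setminus R$ forces $1\in\mx$, i.e.\ an identity $\sum m_{k}t_{k}(x)=1$ with $m_{k}\in\mathfrak{m}$; replacing the $m_{k}$ by affine coordinates $y_{k}$ gives a closed $Z\subseteq V\times\An{m}$, and since $\mathfrak{m}=\ker f$, applying $f$ reduces the defining equation to $-1=0$, so the preservation (easy) half of van den Dries shows the projection is not closed. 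That route needs no finite generation of types at all, which is exactly the economy your version loses. One last bookkeeping point: van den Dries should be applied to $Th_{\mathscr{L}}(\cF)$ (the elementary diagram), as the paper sets up, so that substructures contain $\cF$ and homomorphisms fix it; your parenthetical ``over the field of definition'' needs this to be literal.
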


Note that the hypothesis of finite rank does not appear, so the algebraic property of the $K$-points of $V$ descending to $R$ interestingly guarantees finite rank. Pong used this affine ``valuative criterion" to produce new examples of $\delta$-complete varieties. In particular, he proved completeness of the projective closure of any $\delta$-variety in $\An{1}$ defined by $x'=P(x)$, where $P$ is a non-differential polynomial (if $P$ is 0, we recover the single-variable case of Kolchin's example of the constants). 

For the rest of this paper we follow Pong's general conventions in \cite{PongDiffComplete2000} when working with $\delta$-varieties:

\begin{enumerate}
\item All $\delta$-varieties considered are defined by $\delta$-polynomials with coefficients from a fixed arbitrary field $\mathcal{F}\models DCF_{0}$. In particular, completeness means completeness with respect to varieties defined over $\mathcal{F}$. 
\item The first-order language $\mathscr{L}$ we use is the language of differential rings, augmented with constant symbols for each element of $\cF$. Our basic theory is $Th_{\mathscr{L}}(\cF)$; i.e., the elementary diagram of $\cF$. This ensures that all models of our theory contain an isomorphic copy of $\cF$.       

\item We assume that $\cF$ is contained in a large saturated model $\cU$ of $DCF_{0}$. Unless stated otherwise, $\Pn{n}$ and $\An{m}$ mean $\Pn{n}(\cU)$ and $\An{m}(\cU)$. We consider a $\delta$-variety $V$ to be comprised of all the $\cU$-points satisfying the defining $\delta$-polynomial equations of $V$.
\end{enumerate}•

Before moving on we should mention several more recent papers addressing similar questions. Prestel (\cite{prestel:val}) uses methods closely related to Pong's in order to prove completeness of projective varieties (non-differential) over algebraically closed and real closed valued fields. Pillay applies more abstract model theory in \cite{PillayDvar} to prove a completeness result about algebraic $D$-groups (a category closely related to that of finite-rank differential varieties).  Freitag translates Pong's paper in \cite{DeltaCompleteness} to the setting of $DCF_{0,m}$ and finds that most results transfer over with little change. Finally, in \cite{JamesOmarWilliam} Freitag, Le\'{o}n S\'{a}nchez, and the present author study a possible simplification of the $\delta$-completeness problem as well as a result about generalized Wronskians that involves $\delta$-complete varieties. The proposed simplification (which is valid assuming a conjecture from differential algebra) is that in evaluating differential completeness of a $\Delta$-variety over $DCF_{0,m}$, it suffices to consider only second factors that have finite rank. That is, if $V$ is a projective $\Delta$-variety, then $V$ is not $\Delta$-complete if and only if there is a finite-rank affine $\Delta$-variety $W$ and $\Delta$-closed subset $Z$ of $V\times W$ whose image in $W$ under projection is not $\Delta$-closed.

\end{section}

\begin{section}{Modified valuative criteria}

We have seen that Pong used his differential valuative criterion to demonstrate $\delta$-completeness of $\delta$-varieties in $\Pn{1}$ whose restrictions to $\An{1}$ are defined by equations of the form $x'=P(x)$. Can this technique be extended to prove completeness results for larger classes of finite-rank $\delta$-varieties? The answer is yes, but it is desirable to modify the valuative criterion before undertaking the project.

First, the valuative criterion only applies to $\delta$-varieties in $\An{n}$. It is true that finite-rank varieties miss generic hyperplanes (\cite{PongDiffComplete2000}), so we may assume that up to projective equivalence a given finite-rank projective $\delta$-variety is contained in a single standard affine chart. But what do such varieties look like? Even in $\An{1}$, it can be non-obvious whether a specific affine $\delta$-variety is projective. While studying complete differential varieties we found the following anomalous example: The projective closure of the affine differential variety $x''=x^2$ does not contain the point at infinity even though the point at infinity satisfies the $\delta$-homogenization of $x''-x^2$. (See the appendix for details.)  

Second, the author's experience in generalizing Pong's work indicates that it is only necessary to treat the elements of $\m$ as differential indeterminates. The calculations boil down to formal elimination arguments. This suggests that it would be more appropriate to have a ``syntactic" version of the valuative criterion that deals with polynomials instead of maximal $\delta$-rings. 

In this section we generalize the valuative criterion to $\delta$-subvarieties of $\mathbb{P}^{n}$ and give two alternate versions having, respectively, more geometric and computational content.

Our proof strategy is the same as Pong's, but some care is required to make the argument work in projective space. As our new examples of complete $\delta$-varieties depend on this result, we lay out the details. First we make an important definition.

\begin{defn}
Let $p = (p_{0}:p_{1}:\dots :p_{n}) \in \Pn{n}$ and let $S$ be a $\delta$-ring. With some abuse of terminology, we say $p$ is in $S$ (denoted $p\in S$) if for some $0\leq i\leq n$ we have $p_{i}\neq 0$ and $\frac{p_{j}}{p_{i}}\in S$ for all $0\leq j \leq n$. If $V$ is a differential subvariety of $\mathbb{P}^{n}$ such that $p\in V$ and $p\in S$, we write $p\in V(S)$.
\end{defn}

Notice that $p$ being in $S$  is independent of the choice of homogeneous coordinates for $p$, so this definition is reasonable. It is crucial to note that it is not sufficient for $p$ to have a representative such that every coordinate belongs to $S$.  More particularly, $p\in S$ implies that $p$ has a representative such that at least one coordinate is equal to $1$ and every other coordinate also belongs to $S$. The motivation for this stronger definition is that it allows us to state and prove a valuative criterion for projective differential varieties in a way that mirrors the affine case.  

\begin{thm} 
Let $V$ be a differential subvariety of $\mathbb{P}^{n}$.  Then $V$ is $\delta$-complete if and only if for every $K$-maximal $\delta$-ring $(R,f,\m)$ and point $p\in V(K)$ we have $p\in V(R)$. 
\end{thm}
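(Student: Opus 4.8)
The plan is to mirror Pong's affine valuative criterion (the theorem stated just above as the affine case) but carry the argument through in projective space, using van den Dries' positive quantifier elimination as the engine. The key translation is that $\delta$-completeness is exactly the assertion that the second-projection image of every Kolchin-closed $Z\subseteq V\times W$ is Kolchin-closed; by positive quantifier elimination, this amounts to a preservation statement: whenever $\mathcal{M},\mathcal{N}\models Th_{\mathscr{L}}(\cF)$, $A\subseteq\mathcal{M}$ is a substructure, $f:A\to\mathcal{N}$ is an $\mathscr{L}$-homomorphism, and a point of $V\times W$ lies in the image set over $\mathcal{M}$, then its image under $f$ lies in the image set over $\mathcal{N}$. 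The substructures $A$ of a model of $DCF_0$ are exactly $\delta$-subrings, and the $\mathscr{L}$-homomorphisms into a model are $\delta$-ring homomorphisms into a $\delta$-field; the point is that a $K$-maximal $\delta$-ring $(R,f,\m)$ is precisely the obstruction data one extracts when such a homomorphism fails to extend.

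First I would fix notation by replacing $W$ with $\An{m}$ (permissible since closedness is local) and spelling out the positive-quantifier-elimination reformulation of $\delta$-completeness for $V$: completeness is equivalent to the statement that for all such $A$, $f$, and all $\bar{w}\in A^m$, if there exists $p\in V$ with $(p,\bar w)\in Z$ over $\mathcal{M}$ then there exists $q\in V$ with $(q,f(\bar w))\in Z$ over $\mathcal{N}$, uniformly over all $Z$. For the forward direction ($\delta$-completeness implies the $K$-point condition), I would argue contrapositively: given a $K$-maximal $(R,f,\m)$ with homomorphism $f:R\to L$ and a point $p\in V(K)$ with $p\notin V(R)$, I would build a specific Kolchin-closed $Z$ whose projection is not closed, witnessing failure of the preservation condition; the data of $f$ being nonextendable to $p$ supplies the required homomorphism that does not preserve membership. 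For the reverse direction, I would start from an arbitrary failure of closedness (a homomorphism $f:A\to\mathcal{N}$ violating preservation for some $Z$), extend $f$ to a maximal $\delta$-ring $(R,f,\m)$ by a Zorn's-lemma argument, and derive a point $p\in V(K)$ with $p\notin V(R)$, contradicting the hypothesis.

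The genuinely new content, and the main obstacle, is handling the projective coordinates correctly, since the whole difficulty flagged in the surrounding text is that $p\in S$ must mean ``some coordinate is a unit and the rest lie in $S$,'' not merely ``every coordinate lies in $S$ up to scaling.'' Concretely, the hard step is: given a point $p=(p_0:\cdots:p_n)\in V(K)$ and a $K$-maximal $(R,\m)$, I must show that failure of $p\in V(R)$ forces a failure of the valuative/preservation condition, and conversely. Here is where the structure theory of maximal $\delta$-rings does the work: $R$ is local with unique maximal $\delta$-ideal $\m$, and the criterion $x\in K\setminus R \iff 1\in\mx$ lets me analyze the ratios $p_j/p_i$. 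I would choose the index $i$ minimizing the ``pole order'' of $p$ at $\m$ (i.e., an index $i$ such that $p_j/p_i\in R$ for all $j$ if any such index exists), reducing the projective claim to an affine claim in the chart $\{x_i\ne 0\}$ and invoking Pong's affine theorem there. The delicate point is ruling out the case where \emph{no} coordinate serves as such a denominator; this is exactly where I expect to use Morrison's results (the linear-relation theorem and its corollary on $x^{(k)}$ or $x^{(k)}/x$ lying in $R$) together with the defining $\delta$-homogeneous equations of $V$ to force some ratio into $R$, completing the reduction. Establishing that dichotomy cleanly, so that the projective statement reduces to the already-proved affine statement without gaps, is the crux of the proof.
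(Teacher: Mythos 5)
Your overall frame---van den Dries' positive quantifier elimination, a contrapositive construction for the forward direction, and a Zorn's-lemma passage to maximal $\delta$-rings---does match the paper. But the step you single out as the crux is a wrong turn. You propose, given $p\in V(K)$, to \emph{prove} that some index $i$ has all ratios $p_j/p_i\in R$, using Morrison's theorems together with the defining $\delta$-homogeneous equations of $V$, and then to quote Pong's affine criterion in the chart $\{x_i\neq 0\}$. That dichotomy is not a lemma to be established: it is precisely the right-hand side of the equivalence being proved, and it can genuinely fail---the paper's own finite-rank example $x''=x^{n}$ ($n\geq 2$) is a projective $\delta$-variety for which the condition fails and which is incomplete. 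When no coordinate serves as a denominator, the proof must instead \emph{exploit} that failure: for each index $i$ with $p_i\neq 0$ one has some $j_i$ with $p_{j_i}/p_i\notin R$, hence $1\in\m\{p_{j_i}/p_i\}$, i.e.\ an identity $\sum_{k} m_{i_k}t_{i_k}(p_{j_i}/p_i)=1$ with $m_{i_k}\in\m$; clearing denominators gives $\delta$-homogeneous polynomials $P_i(\bar{x},\bar{y})$ in which the elements of $\m$ become affine indeterminates $\bar{y}$, and since $\m=\ker(f)$, applying $f$ annihilates every monomial except one of the form $-x_i^{r_i}$, making the image formula contradictory and exhibiting a non-closed projection. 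Morrison's results play no role in the proof of the criterion itself; they enter only afterwards, when the criterion is \emph{applied} to particular varieties (linear equations, $xx''=x'$, etc.).

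The chart reduction also fails in the other direction, because completeness is not chart-local and the relevant chart depends on both $p$ and $R$: for instance, the projective closure of the constants is complete, yet its affine part violates the affine criterion (for $c$ a constant transcendental over $\cF$, extend the $\delta$-homomorphism $\cF[1/c]\rightarrow\cF$, $1/c\mapsto 0$, to a $K$-maximal $(R,f,\m)$; then $1/c\in\m$, so $c\notin R$). The paper therefore does not invoke the affine theorem at all; it reruns the positive-QE argument directly in projective space, and the one genuinely delicate point---the projection formula contains the non-positive disjunction $\vee_{0\leq i\leq n}(x_i\neq 0)$, which homomorphisms need not preserve---is resolved exactly by the strengthened definition of $p\in R$: one normalizes the witness $p$ so that its $i$-th coordinate is literally $1$ and all other coordinates lie in $R$, whence $f(1)=1\neq 0$ preserves the disjunction. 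You correctly flag the importance of that definition, but your proposal never supplies this normalization mechanism, which is the actual content making the projective case go through.
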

\begin{proof}

\noindent To prove the forward direction we establish the contrapositive. Suppose $V$ has a point $p$ such that $p\in K$ but $p\notin R$. We show that $V$ is not $\delta$-complete by finding a Kolchin-closed set of $V\times \An{m}$ whose projection is not Kolchin closed.

By assumption there is a point $p=(p_{0}:\dots:p_{n})\in V(K)$ such that $p\notin R$; our notion of $p\in K$ is independent of representative, so we may assume each coordinate of $p$ belongs to $K$. Let $I\subseteq \{0,\dots,n\}$ denote the set of indices of non-zero coordinates of $p$. The fact that $p\notin R$ implies for each $i\in I$ there is an index $j_{i}$ such that $\frac{p_{j_{i}}}{p_{i}}\notin R$. (Necessarily $j_{i}\neq i$ because $\frac{p_{i}}{p_{i}}=1\in R.$) It follows from the properties of maximal $\delta$-rings that $1\in \m\{\frac{p_{j_{i}}}{p_{i}}\}$, so there exist elements $m_{i_{k}}\in \m$ satisfying an equation $\sum_{k} m_{i_{k}}t_{i_{k}}\left(\frac{p_{j_{i}}}{p_{i}}\right) =1$, where the $t_{i_{k}}$ are $\delta$-monomials in $\frac{p_{j_{i}}}{p_{i}}$.

Let $P_{i}(\bar{x},\bar{y})$ be the $\delta$-polynomial resulting from clearing denominators in \\$\sum_{k} y_{i_{k}}t_{i_{k}} \left(\frac{x_{j_{i}}}{x_{i}}\right) -1$, where $\bar{x},\bar{y}$ are differential indeterminates. Note that $P_{i}$ is $\delta$-homogeneous in $\bar{x}$ and so defines a $\delta$-subvariety of a product variety where the $\bar{x}$ are projective coordinates and the $\bar{y}$ are affine coordinates. Also observe that every $\bar{x}$-monomial in $P_{i}$ has a coefficient from $\bar{y}$ with the exception of one: the monomial obtained by multiplying $-1$ by a positive power of $x_{i}$ when clearing denominators. 
 By design, the following formula $\varphi(\bar{y})$ is true in $K$ when $y_{i_{k}}$ is interpreted as $m_{i_{k}}$:
\[
\exists \bar{x}\left( \bar{x}\in V \wedge \left(\wedge_{i\notin I}x_{i} =0\right)\wedge \left(\vee_{i\in I} x_{i}\neq 0\right) \wedge \left( \wedge_{i\in I} P_{i}(\bar{x},\bar{y})=0\right) \right).
\]

\noindent (That is, $p$ witnesses the truth of $\varphi(\bar{m}$).) The formula $\varphi(\bar{y})$ defines a projection whose failure to be closed would imply that the projective $\delta$-variety $V\cap \wedge_{i\notin I} (x_{i}=0)$ is not complete. By van den Dries' positive quantifier-elimination test, we simply need to verify that $\varphi(f(\bar{m}))$ is not true in the codomain of $f$.

Because $\m=\text{ker}(f)$, for every $i\in I$ all monomials of $P_{i}(\bar{x}, f(\bar{m}))$ vanish except for $-x_{i}^{r_{i}}$ for some positive integer $r_{i}$. Then $\varphi(f(\bar{m}))$ asserts that  
\[
\exists \bar{x}\left( \bar{x}\in V \wedge \left(\wedge_{i\notin I}x_{i} =0\right)\wedge \left(\vee_{i\in I} x_{i}\neq 0\right) \wedge \left( \wedge_{i\in I} (-x_{i}^{r_{i}}=0)\right) \right),
\]

\noindent which is contradictory. This proves that $V$ is not complete. 

For the reverse direction, let a collection $\{P_{j}(\bar{x},\bar{y})\}$ of $\delta$-polynomials define an arbitrary subvariety of $V \times \mathbb{A}^{m}$, where $\bar{x},\bar{y}$ are respectively projective and affine coordinates. To prove $V$ is $\delta$-complete we must show that the following $\varphi(\bar{y})$ is equivalent to a positive quantifier-free formula:
\[
\exists \bar{x}\left( \bar{x}\in V \wedge \left(\vee_{0\leq i\leq n} (x_{i}\neq 0)\right) \wedge \left( \wedge_{j} P_{j}(\bar{x},\bar{y})=0\right) \right).
\]

\noindent It is enough to prove that if $(R,f,\m)$ is a $K$-maximal differential ring such that $f:R\rightarrow L$ and $K\models \varphi (\bar{a})$ for elements $\bar{a}$ of $R$, then $L \models \varphi(f(\bar{a}))$. 

Let $\psi(\bar{x},\bar{y})$ be the subformula of $\varphi$ such that $\varphi(\bar{y})= \exists \bar{x}\psi(\bar{x},\bar{y})$. Since $K\models \varphi(\bar{a})$, there is $p= (p_{0}:\dots:p_{n})\in K$ such that $K \models \psi(p_{0},\dots,p_{n}, \bar{a})$. (What $K\models \varphi(\bar{a})$ actually tells us is that each $p_{i}\in K$ and some $p_{i}\neq0$, but $K$ is a field so we may divide by any non-zero coordinate and still have every coordinate in $K$; thus the definition of $p\in K$ is satisfied.) We have $p\in V(K)$, so it follows by hypothesis that $p\in V(R)$. Dividing by coordinate $p_{i}$ for some $0\leq i\leq n$ ensures that every resulting coordinate belongs to $R$; importantly, the resulting $i$-th coordinate is 1.  
 
Hence $\psi(\frac{p_{0}}{p_{i}},\dots, 1,\dots ,\frac{p_{n}}{p_{i}}, \bar{a})$ has parameters from $R$ and is true in $K$. The map $f$ is a homomorphism with domain $R$, homomorphisms preserve satisfaction of positive quantifier-free formulas, and the only non-positive subformula of $\psi(\bar{x},\bar{y})$ is the disjunction $\vee_{0\leq i\leq n}(x_{i}\neq 0)$, so we conclude $L \models \psi(f(\frac{p_{0}}{p_{i}}),\dots,f(1)=1 , \dots,f(\frac{p_{n}}{p_{i}}), f(\bar{a}))$. This implies the desired $L \models \varphi(f(\bar{a}))$, finishing the proof.

\end{proof}

For future reference, we call this result the \emph{projective valuative criterion}. For $\delta$-subvarieties of $\mathbb{P}^{1}$ the theorem boils down to the form we use for concrete examples in the next section:

\begin{thm}
Let $V$ be a differential subvariety of $\mathbb{P}^{1}$.  Then $V$ is $\delta$-complete if and only if for every $K$-maximal $\delta$-ring $(R,f,\m)$ and point $p=(x:1)$ or $(1:x)\in V(K)$ we have either $x\in R$ or $\frac{1}{x}\in R$. 
\end{thm}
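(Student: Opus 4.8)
The plan is to derive this statement directly from the projective valuative criterion established above, specialized to $n = 1$. That theorem already asserts the equivalence of $\delta$-completeness with the requirement that $p \in V(K)$ implies $p \in V(R)$ for every $K$-maximal $\delta$-ring $(R, f, \m)$; all that remains is to rewrite the abstract membership $p \in V(R)$ as the concrete dichotomy ``$x \in R$ or $\frac{1}{x} \in R$'' in the one-dimensional setting.

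First I would observe that every point of $\mathbb{P}^{1}(K)$ admits a representative of one of the two stated forms. Since $p \in K$ is independent of the choice of homogeneous coordinates and $K$ is a field, I can divide through by a nonzero coordinate: if the second coordinate is nonzero this yields $p = (x:1)$ with $x = p_{0}/p_{1} \in K$, and if the first coordinate is nonzero it yields $p = (1:x)$ with $x = p_{1}/p_{0} \in K$. When both coordinates are nonzero both representations are available, and the computation below shows they impose equivalent conditions.

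Next I would unpack $p \in R$ using the definition of $p \in S$, which demands that for some index $i$ the $i$-th coordinate be nonzero and every ratio $p_{j}/p_{i}$ lie in $R$. Taking $p = (x:1)$ and testing $i = 1$ gives the requirement $x \in R$ (together with $1 \in R$, which is automatic); testing $i = 0$ requires $x \neq 0$ and $\frac{1}{x} \in R$. Hence $p \in R$ holds precisely when $x \in R$ or $\frac{1}{x} \in R$, the latter clause being relevant only when $x \neq 0$; and in the degenerate case $x = 0$ one has $x \in R$ trivially. The computation for $p = (1:x)$ is identical by symmetry. Combining with $p \in V$, the abstract condition $p \in V(R)$ becomes exactly: $p \in V$ and either $x \in R$ or $\frac{1}{x} \in R$, which is the statement to be proved.

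The only point requiring care---and it is bookkeeping rather than a genuine obstacle---is to respect the nonstandard meaning of ``$p \in S$'' introduced earlier: it does not suffice for $p$ to have some representative with all coordinates in $S$; rather one coordinate must be normalized to $1$ with the others in $S$. Verifying that both representatives $(x:1)$ and $(1:x)$ reduce to the single dichotomy ``$x \in R$ or $\frac{1}{x} \in R$'', and checking the degenerate case $x = 0$, completes the reduction and hence the proof.
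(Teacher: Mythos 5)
Your proposal is correct and matches the paper's intent exactly: the paper states this theorem without proof as an immediate specialization of the projective valuative criterion to $n=1$, which is precisely the reduction you carry out. Your unpacking of the definition of $p\in R$ for the two normalized representatives $(x:1)$ and $(1:x)$, including the degenerate case $x=0$ and the caveat that membership requires a coordinate normalized to $1$, supplies the routine details the paper leaves to the reader.
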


We now give the aforementioned alternative versions of the criterion. The proof of the next one uses arguments similar to the original and is omitted (see \cite{SimmonsThesis}).  
In addition to not mentioning maximal $\delta$-rings, this geometric version limits the cases that we need to check. The statement looks complicated, but it simply identifies possible witnesses of $\delta$-incompleteness. 

\begin{thm} 
Let $V$ be a differential subvariety of $\mathbb{P}^{n}$; call the projective coordinates $x_{0},\dots,x_{n}$. Let $\mathcal{W}$ be the collection of all differential subvarieties $W \subseteq \mathbb{P}^{n}\times \An{m}$  that have the following form: Choose a non-empty subset $I$ of $\{0,\dots, n\}$, and for each $i\in I$ choose some $j_{i}\neq i$ between $0$ and $n$. For each $i\in I$, choose  some finite number of $\delta$-monomials $t_{i_{k}}\!\!\left(\frac{x_{j_{i}}}{x_{i}}\right)$ whose variables have been replaced by the fraction $\frac{x_{j_{i}}}{x_{i}}$. Finally, for each $i\in I$ introduce new variables $y_{i_{k}}$ (there are $m$ such variables in all, representing the affine coordinates of $\An{m}$). Define $W$ to be the common zero locus in $\mathbb{P}^{n}\times \An{m}$  of the following $\delta$-polynomials:

\begin{itemize} 
 \item $x_{j}$, for all $j$ such that $0\leq j \leq n$ and $j\notin I$.
\item the $\delta$-homogeneous (with respect to $x_{0},\dots,x_{n}$) polynomials resulting from clearing denominators 
in $\sum_{k} y_{i_{k}}t_{i_{k}}\!\!\left(\frac{x_{j_{i}}}{x_{i}}\right) -1$, for all $i\in I$.  
\item The $\delta$-homogeneous $\delta$-polynomials that define $V$.
\end{itemize}

Then $V$ is $\delta$-complete if and only if for every $W\in\mathcal{W}$, the Kolchin closure $\overline{\pi_{2}(W)}$ of the image of the projection of $W$ into $\An{m}$ does not contain the point $\bar{0}$. 

\end{thm}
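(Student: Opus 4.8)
The plan is to derive this geometric criterion from the projective valuative criterion (the earlier theorem characterizing $\delta$-completeness via $K$-maximal $\delta$-rings), making precise the implicit claim that the collection $\mathcal{W}$ captures all possible witnesses to $\delta$-incompleteness. Both directions will be shown by relating membership of a point in a $K$-maximal $\delta$-ring to the vanishing of $\bar 0$ in the image of one of these specially constructed projections.

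For the forward direction I would argue the contrapositive: assume some $W\in\mathcal{W}$ has $\bar 0\in\overline{\pi_{2}(W)}$ and produce a $K$-maximal $\delta$-ring $(R,f,\m)$ and a point $p\in V(K)$ with $p\notin V(R)$, contradicting the projective valuative criterion. The key observation is that the defining polynomials of each $W$ are exactly the shape that appears in the first half of the projective valuative criterion's proof: for each $i\in I$ the cleared-denominator form of $\sum_{k}y_{i_k}t_{i_k}(x_{j_i}/x_i)-1$ encodes the relation $1\in\m\{p_{j_i}/p_i\}$ witnessing that $p_{j_i}/p_i\notin R$. Concretely, if $\bar 0\in\overline{\pi_2(W)}$, then since $\mathcal{U}$ is existentially closed and we are working over a differentially closed field, there is a $K$-point $(p,\bar a)$ of $W$; the conditions $x_j=0$ for $j\notin I$ and the relations with coefficients $\bar a\to\bar 0$ under the residue map let us build (or invoke Blum--Morrison to obtain) a maximal $\delta$-ring whose maximal ideal $\m$ contains the relevant $\bar a$-values, so that $p_{j_i}/p_i\notin R$ for each $i\in I$, i.e.\ $p\notin V(R)$.

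For the reverse direction, assume $V$ is $\delta$-incomplete; by the projective valuative criterion there is a $K$-maximal $\delta$-ring $(R,f,\m)$ and $p=(p_0:\dots:p_n)\in V(K)$ with $p\notin V(R)$. I would let $I$ be the index set of nonzero coordinates of $p$, and for each $i\in I$ use $p\notin R$ to select $j_i\neq i$ with $p_{j_i}/p_i\notin R$, hence $1\in\m\{p_{j_i}/p_i\}$, yielding $\delta$-monomials $t_{i_k}$ and elements $m_{i_k}\in\m$ realizing $\sum_k m_{i_k}t_{i_k}(p_{j_i}/p_i)=1$. These exact choices define a specific $W\in\mathcal{W}$, and $(p,\bar m)$ is a $K$-point of $W$ with $f(\bar m)=\bar 0$ since each $m_{i_k}\in\m=\ker f$. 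Thus $\bar 0=f(\bar m)\in f(\pi_2(W(R)))\subseteq\overline{\pi_2(W)}$, giving the desired $W$ with $\bar 0$ in the closure of its image.

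The main obstacle I anticipate is the passage from ``$\bar 0$ lies in the Kolchin closure of $\pi_2(W)$'' back to the existence of an \emph{honest} $K$-maximal $\delta$-ring in the forward direction. Knowing only that $\bar 0$ is in the closure does not immediately hand us a $K$-point of $W$ lying over $\bar 0$, nor a maximal $\delta$-ring; one must instead find a $K$-point $(p,\bar a)$ of $W$ with $\bar a$ generic enough that the $\delta$-ring generated by the coordinate ratios, localized appropriately, can be extended (via Zorn's lemma and the Blum--Morrison machinery on extensions of $\delta$-specializations) to a $K$-maximal $\delta$-ring in whose maximal ideal the $\bar a$ reside. Handling the $x_j=0$ constraints for $j\notin I$ and ensuring the constructed ring genuinely witnesses $p\notin V(R)$ rather than merely $p_{j_i}/p_i\notin R$ for one $i$ is the delicate bookkeeping step; this is precisely where the strengthened definition of $p\in S$ (requiring \emph{all} ratios from a single nonzero coordinate to lie in $S$) does the necessary work, and is why the criterion quantifies over all $W\in\mathcal{W}$ rather than a single one.
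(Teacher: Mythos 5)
Your reverse direction is essentially the intended argument: it reruns the first half of the paper's proof of the projective valuative criterion, extracting from $p\in V(K)\setminus V(R)$ the index set $I$, the indices $j_{i}$, and the witnesses $\sum_{k}m_{i_{k}}t_{i_{k}}(p_{j_{i}}/p_{i})=1$ with $m_{i_{k}}\in\mathfrak{m}$, which define a $W\in\mathcal{W}$ with $(p,\bar{m})\in W$. But your final step, ``$\bar{0}=f(\bar{m})\in f(\pi_{2}(W(R)))\subseteq\overline{\pi_{2}(W)}$,'' asserts exactly the nontrivial point without proof: $f$ maps into an abstract $\delta$-field $L$, not into $\mathcal{U}$, so you cannot literally transport points of $\pi_{2}(W)$ into its Kolchin closure. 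What is needed is the ideal-theoretic version: any $g\in\mathcal{F}\{\bar{y}\}$ vanishing on $\pi_{2}(W)$ satisfies $g(\bar{m})=0$ in $K$; applying $f$, which fixes $\mathcal{F}$ pointwise (the language has constants for $\mathcal{F}$) and kills $\mathfrak{m}$, yields $g(\bar{0})=0$, and since $g(\bar{0})\in\mathcal{F}$ this holds in $\mathcal{F}$ itself; hence $\bar{0}\in\mathbf{V}(I(\pi_{2}(W)))=\overline{\pi_{2}(W)}$. This is fixable, but as written it is a placeholder, not a proof.

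The genuine gap is your forward direction, and you flagged it yourself without closing it. Knowing $\bar{0}\in\overline{\pi_{2}(W)}$ never hands you a $K$-point of $W$ lying over $\bar{0}$ --- indeed no such point exists for any $W\in\mathcal{W}$ --- and your proposed repair (find $\bar{a}\in\pi_{2}(W)$ ``generic enough,'' then build a $K$-maximal $\delta$-ring with $\bar{a}\subseteq\mathfrak{m}$) is left entirely as a sketch; making it work would require quantifier elimination in $DCF_{0}$ to realize a generic point of a component of $\overline{\pi_{2}(W)}$ through $\bar{0}$ inside $\pi_{2}(W)$, then extending the $\delta$-specialization $\bar{a}\mapsto\bar{0}$ by Zorn's lemma to a maximal $\delta$-ring --- none of which appears in your argument. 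More importantly, all of this machinery is unnecessary, and the elementary route (implicit in the paper's proof of the projective valuative criterion, where $\varphi(f(\bar{m}))$ is seen to be contradictory) is what the omitted proof relies on: each $W\in\mathcal{W}$ is by construction a Kolchin-closed subset of $V\times\mathbb{A}^{m}$, since the defining polynomials of $V$ are among its equations; so if $V$ is $\delta$-complete then $\pi_{2}(W)$ is already closed and $\overline{\pi_{2}(W)}=\pi_{2}(W)$. And $\bar{0}\notin\pi_{2}(W)$ outright: setting $\bar{y}=\bar{0}$ in the cleared-denominator polynomial for $i\in I$ leaves only the single monomial $-x_{i}^{r_{i}}$ (the unique term carrying no $y$-coefficient), so a point of $W$ over $\bar{0}$ would satisfy $x_{i}=0$ for all $i\in I$ together with $x_{j}=0$ for all $j\notin I$, forcing every projective coordinate to vanish, which is impossible in $\mathbb{P}^{n}$. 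Your contrapositive through maximal $\delta$-rings thus aims the hard tool at the easy direction and leaves its key construction unexecuted.
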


The third version requires the following definition and proposition that are essentially the same as for non-differential polynomial rings (see Theorem 3, p. 193, and Proposition 5, p. 397, of \cite{clo1}).

\begin{defn}
 Let $K\{\bar{x},\bar{y}\}=K\{x_{0},\dots, x_{n}, y_{1},\dots, y_{m}\}$ be a differential polynomial ring over a differential field $K$. If $I$ is a differential ideal of $K\{\bar{x},\bar{y}\}$ generated by $\delta$-polynomials $\delta$-homogeneous with respect to $x_{0},\dots,x_{n}$, we define the projective differential elimination ideal of $I$ eliminating $\bar{x}$ to be $\hat{I}_{\bar{x}}=\{ g\in K\{\bar{y}\} \mid \text{ for all } 0\leq i\leq n \text{ there is } e_{i} \geq 0 \text{ such that } x_{i}^{e_{i}}g\in I\}$.
\end{defn}

\begin{prop}
 If $K\models DCF_{0}$, then $\mathbf{V}(\hat{I}_{\bar{x}})=\overline{\pi_{2}(\mathbf{V}(I))}$.
\end{prop}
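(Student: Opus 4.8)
The plan is to establish the two inclusions $\mathbf{V}(\hat{I}_{\bar{x}})\supseteq\overline{\pi_{2}(\mathbf{V}(I))}$ and $\mathbf{V}(\hat{I}_{\bar{x}})\subseteq\overline{\pi_{2}(\mathbf{V}(I))}$ separately, mirroring the classical proof of the analogous statement for ordinary polynomial rings (Proposition~5, p.~397 of \cite{clo1}), but taking care that the projective elimination uses the "for all $i$ there is $e_i$" condition rather than a single power.

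\emph{First inclusion.} I would begin by showing $\overline{\pi_{2}(\mathbf{V}(I))}\subseteq\mathbf{V}(\hat{I}_{\bar{x}})$. Since $\mathbf{V}(\hat{I}_{\bar{x}})$ is Kolchin-closed, it suffices to show $\pi_{2}(\mathbf{V}(I))\subseteq\mathbf{V}(\hat{I}_{\bar{x}})$. So fix $\bar{b}\in\pi_{2}(\mathbf{V}(I))$, witnessed by a point $(\bar{a},\bar{b})\in\mathbf{V}(I)$ with $\bar{a}$ a nonzero homogeneous tuple; say $a_{i_0}\neq 0$. Given any $g\in\hat{I}_{\bar{x}}$, by definition there is $e_{i_0}\geq 0$ with $x_{i_0}^{e_{i_0}}g\in I$, so $a_{i_0}^{e_{i_0}}g(\bar{b})=0$; since $a_{i_0}\neq 0$ we get $g(\bar{b})=0$. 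As $g$ was arbitrary, $\bar{b}\in\mathbf{V}(\hat{I}_{\bar{x}})$. This direction does not use differential closedness.

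\emph{Second inclusion.} The harder direction is $\mathbf{V}(\hat{I}_{\bar{x}})\subseteq\overline{\pi_{2}(\mathbf{V}(I))}$, and this is where $K\models DCF_0$ enters, exactly as algebraic closedness does classically. I would argue contrapositively: take $\bar{b}\notin\overline{\pi_{2}(\mathbf{V}(I))}$ and produce $g\in\hat{I}_{\bar{x}}$ with $g(\bar{b})\neq 0$. Since $\bar{b}$ lies outside the Kolchin closure, there is some $g_0\in K\{\bar{y}\}$ vanishing on $\pi_{2}(\mathbf{V}(I))$ with $g_0(\bar{b})\neq 0$; equivalently, the differential system $I$ together with $g_0\neq 0$ has no solution with nonzero $\bar{x}$-part. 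The key step is to promote such a $g_0$ to membership in the projective elimination ideal, i.e.\ to show that vanishing on the projection forces each $x_i^{e_i}g_0\in I$ for suitable $e_i$. Working chart by chart, for each $i$ I would dehomogenize by setting $x_i=1$, reducing to the affine differential elimination situation in the remaining variables; here existential closedness of $DCF_0$ guarantees that if a differential system has no solution in $K$ it has none in any extension, so the (affine) elimination ideal captures exactly the projection, and a suitable power $x_i^{e_i}$ reclears denominators to land the relation back in the homogeneous ideal $I$. Intersecting the finitely many chart conditions yields a single $g\in\hat{I}_{\bar{x}}$ (or a finite product thereof) with $g(\bar{b})\neq 0$.

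The main obstacle I anticipate is the affine differential elimination step that underlies the second inclusion: unlike the polynomial case, the radical differential ideal generated by $I$ need not be finitely generated in a way that makes "clearing one denominator" automatic, and the Ritt--Raudenbush basis theorem must be invoked to control the finitely many components and to justify that a uniform power $x_i^{e_i}$ suffices per chart. I would also be careful that the passage between charts is consistent, so that the $e_i$ can be chosen simultaneously; the clean way to phrase this is to note $\mathbf{V}(\hat{I}_{\bar{x}})=\bigcap_i \mathbf{V}((I:x_i^\infty)\cap K\{\bar{y}\})$ and that on the chart $x_i\neq 0$ this affine saturation ideal computes the projection precisely by existential closedness. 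Everything else is the standard translation of the commutative-algebra argument into the Kolchin setting, with Theorem~2.8 (Pong's affine valuative criterion framework) and the fact that $\mathbb{A}^1$ and products are Noetherian for the Kolchin topology supplying the finiteness needed.
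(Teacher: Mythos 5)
Your overall plan is the right one: the paper gives no written proof of this proposition (it simply points to the classical statements in \cite{clo1}), and the intended argument is exactly your two-inclusion, chart-by-chart transfer, with the differential Nullstellensatz over $DCF_0$ (Seidenberg, resting on Ritt--Raudenbush) playing the role of algebraic closedness. Your first inclusion is correct as written. But the second inclusion contains a genuine error: the displayed identity $\mathbf{V}(\hat{I}_{\bar{x}})=\bigcap_i \mathbf{V}\bigl((I:x_i^\infty)\cap K\{\bar{y}\}\bigr)$ has the lattice operation backwards. The ideal identity $\hat{I}_{\bar{x}}=\bigcap_i\bigl((I:x_i^\infty)\cap K\{\bar{y}\}\bigr)$ is correct, but $\mathbf{V}$ turns an intersection of ideals into a \emph{union} of varieties, so $\mathbf{V}(\hat{I}_{\bar{x}})=\bigcup_i \mathbf{V}\bigl((I:x_i^\infty)\cap K\{\bar{y}\}\bigr)$. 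The union is also what the geometry demands: a point of the projection needs only \emph{one} nonzero coordinate, so $\overline{\pi_2(\mathbf{V}(I))}=\bigcup_i\overline{\pi_2(\mathbf{V}(I)\cap\{x_i\neq 0\})}$. For a quick sanity check take $I=[x_0y]$ in $\mathbb{P}^1\times\mathbb{A}^1$: here $\hat{I}_{\bar{x}}=0$ and $\mathbf{V}(\hat{I}_{\bar{x}})=\mathbb{A}^1$, while your intersection formula would give only $\{0\}$. Since you advertise this identity as ``the clean way'' to make the chart data consistent, that step fails as stated; in fact the consistency worry it addresses is a non-issue, because the definition of $\hat{I}_{\bar{x}}$ quantifies the $e_i$ separately for each $i$.

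A second, smaller flaw: ``promoting'' a fixed $g_0$ does not work, since vanishing on the projection only places $g_0$ in the \emph{radical} of the relevant elimination ideal, so you cannot conclude $x_i^{e_i}g_0\in I$ for $g_0$ itself. Either pass to a power $g_0^{m}$ (legitimate because in characteristic zero the radical of a differential ideal is differential), or, cleaner, bypass $g_0$: for each $i$, since $\bar{b}\notin\overline{\pi_2(\mathbf{V}(I)\cap\{x_i\neq 0\})}=\mathbf{V}(I^{(i)}\cap K\{\bar{y}\})$ --- this equality being the affine closure theorem over $DCF_0$, where the Nullstellensatz actually enters --- choose $g_i\in I^{(i)}\cap K\{\bar{y}\}$ with $g_i(\bar{b})\neq 0$, where $I^{(i)}$ is the dehomogenization at $x_i=1$. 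Rehomogenization does give $x_i^{e_i}g_i\in I$: setting $x_i\mapsto 1$ (and its derivatives to $0$) and substituting $x_j\mapsto x_j/x_i$ are differential homomorphisms, and $\delta$-homogeneity of the generators of $I$ makes clearing the single denominator $x_i$ legitimate, as you anticipated. Then $g=\prod_i g_i$ satisfies $x_i^{e_i}g\in I$ for \emph{every} $i$ (multiply the $i$-th relation by the remaining factors, which stay in the ideal) and $g(\bar{b})\neq 0$, so $g\in\hat{I}_{\bar{x}}$ witnesses $\bar{b}\notin\mathbf{V}(\hat{I}_{\bar{x}})$. Your parenthetical ``(or a finite product thereof)'' gestures at exactly this, but the product must be of distinct per-chart witnesses drawn from the elimination ideals, not powers of a single $g_0$. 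With these two repairs your argument is precisely the differential transcription of the classical proof that the paper's citation intends.
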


\begin{thm}
Let $V$, $\mathcal{W}$, and $W$ be as in the preceding theorem, and let $I$ be the differential ideal in $\mathcal{F}\{x_{0},\dots,x_{n}, y_{1},\dots,y_{m}\}$ generated by the defining polynomials of $W$.  Then $V$ is $\delta$-complete if and only if for all $W\in \mathcal{W}$ the projective differential elimination ideal $\hat{I}_{\bar{x}}$ contains a $\delta$-polynomial in $\bar{y}$ having non-zero constant term. (Here ``constant term" refers to any element of $\mathcal{F}$, not necessarily one having derivative equal to 0.) 
\end{thm}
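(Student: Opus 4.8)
The plan is to deduce this criterion from the geometric valuative criterion of the preceding theorem together with the elimination proposition $\mathbf{V}(\hat{I}_{\bar{x}}) = \overline{\pi_{2}(\mathbf{V}(I))}$; with those in hand the remaining work is a translation between a geometric condition on $\bar{0}$ and an ideal-membership condition. First I would note that, by construction, $W$ is the common zero locus of its defining $\delta$-polynomials, so $W = \mathbf{V}(I)$ for the $\delta$-ideal $I$ they generate. Every such generator is $\delta$-homogeneous in $\bar{x}$: the coordinate functions $x_{j}$ among them have degree one, the relations obtained by clearing denominators were arranged to be $\delta$-homogeneous, and the defining polynomials of $V$ are $\delta$-homogeneous by hypothesis. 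Thus the hypotheses of the elimination proposition are met (we work over $\cF \models DCF_{0}$), and I may replace $\overline{\pi_{2}(W)}$ by $\mathbf{V}(\hat{I}_{\bar{x}})$.

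Next I would apply the geometric criterion, which asserts that $V$ is $\delta$-complete if and only if for every $W \in \mathcal{W}$ the closure $\overline{\pi_{2}(W)}$ omits $\bar{0}$. Combining this with the previous step, $V$ is $\delta$-complete if and only if $\bar{0} \notin \mathbf{V}(\hat{I}_{\bar{x}})$ for every $W \in \mathcal{W}$. Because the quantifier over $\mathcal{W}$ sits outside both formulations, it suffices to prove, for a single fixed $W$, that $\bar{0} \notin \mathbf{V}(\hat{I}_{\bar{x}})$ holds exactly when $\hat{I}_{\bar{x}}$ contains a $\delta$-polynomial in $\bar{y}$ with non-zero constant term.

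The crux is this last equivalence. By the definition of the Kolchin zero locus, $\bar{0} \notin \mathbf{V}(\hat{I}_{\bar{x}})$ if and only if some $g \in \hat{I}_{\bar{x}}$ does not vanish at $\bar{0}$, so I must pin down what evaluation at $\bar{0}$ does to a $\delta$-polynomial. The point $\bar{0} \in \An{m}$ has each coordinate equal to $0 \in \cU$; since $\delta(0) = 0$, every induced derivative $y_{i}^{(k)}$ specializes to $0$ as well. Regarding $g$ as an ordinary polynomial in the algebraic indeterminates $y_{i}^{(k)}$, this substitution kills every monomial containing at least one such indeterminate and leaves precisely the constant term, an element of $\cF$. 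Hence $g(\bar{0})$ is exactly the constant term of $g$, and the condition $g(\bar{0}) \neq 0$ coincides with $g$ having non-zero constant term. Quantifying back over $\mathcal{W}$ then yields the stated criterion.

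I expect no serious obstacle: the substantive content has already been packaged into the geometric criterion and the elimination proposition, and what remains is careful bookkeeping. The one point deserving genuine care is the identification of evaluation at the differential point $\bar{0}$ with extraction of the constant term --- in particular remembering that the derivatives, not merely the values, of the coordinates all vanish --- together with the verification that the generators of $I$ really are $\delta$-homogeneous in $\bar{x}$, so that the elimination proposition applies.
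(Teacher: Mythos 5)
Your proposal is correct and follows essentially the same route as the paper: combine the geometric valuative criterion with the elimination proposition $\mathbf{V}(\hat{I}_{\bar{x}})=\overline{\pi_{2}(W)}$, and then observe that $\bar{0}\notin\mathbf{V}(\hat{I}_{\bar{x}})$ holds exactly when some member of $\hat{I}_{\bar{x}}$ has non-zero constant term. Your write-up is in fact slightly more careful than the paper's brief argument, since you explicitly check the $\delta$-homogeneity hypothesis of the elimination proposition and spell out why evaluation at $\bar{0}$ (including all induced derivatives) extracts precisely the constant term.
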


\begin{proof}
If $V$ is $\delta$-complete, the preceding version of the projective valuative criterion asserts that $\bar{0}\notin \overline{\pi_{2}(W)}$. Because $\mathbf{V}(\hat{I}_{\bar{x}})=\overline{\pi_{2}(W)}$, $\hat{I}_{\bar{x}}$ must contain a polynomial with non-zero constant term. Conversely, if $V$ is not complete, then $\bar{0}\in \overline{\pi_{2}(W)}$ and so no member of $\hat{I}_{\bar{x}}$ can have a non-zero constant term.
\end{proof}

We now have three perspectives on $\delta$-completeness: the original projective valuative criterion with its use of maximal $\delta$-rings, a more geometric version using only the Kolchin closure of the image under projection of subvarieties of a special form, and a more ``syntactic" version concerning the $\delta$-polynomials that show up in elimination ideals. The latter two results give more concrete ways of showing that a specific $\delta$-variety is complete, but they are still difficult to work with given the well-known complexity of differential polynomial rings.

\end{section}

\begin{section}{New examples of $\delta$-completeness}
To our knowledge, the only complete $\delta$-varieties explicitly identified prior to the present work were Kolchin's example of the projective closure of the constants, Pong's example of the projective closure of $x'=P(x)$, and Freitag's extension of these examples to the partial differential case. In this section we list new $\delta$-complete examples we have found using the projective valuative criterion and explicit elimination algorithms. The following $\delta$-varieties are taken to lie in $\Pn{1}$. As before, $\cF$ is a model of $DCF_{0}$.

\begin{enumerate}
\item General classes:

\begin{itemize}
\item Projective closures of varieties defined by linear differential polynomials in one variable.

\item The projective closure of $x^{(n)} = P (x^{(n-1)})$ ($P$ a non-differential polynomial in $x^{(n-1)}$).

\item The projective closure of $P(x^{(n)})=0$ for any non-constant polynomial $P(x^{(n)})\in\mathcal{F}[x^{(n)}]$ (e.g., $(x^{(n)})^{2} - 3x^{(n)}+2=0$).
\end {itemize}

\item First-order: 
\begin{itemize}
\item The projective closure of $Q(x)x'=P(x)$ ($Q$ and $P$ non-differential polynomials in $x$). Note that this is a strict generalization of the previously known examples in one variable.
 \item The projective closure of $(x')^{n} = x+ \alpha$  ($\alpha$ is an arbitrary element of $\mathcal{F}$).
\end{itemize}

\item Second-order:
\begin{itemize}
\item The projective closure of $xx''=x'$.  This variety was shown by Poizat to have Morley rank 1 (see \cite{Marker:model_thy_of_fields} for the argument), so it has a lower model-theoretic rank than the order suggests. In spite of this anomaly, completeness shows that the variety at least is nice topologically.
\end{itemize}
\end{enumerate}•

We present proofs for two of these; for the others, consult the author's thesis (\cite{SimmonsThesis}).

\begin{thm}
\label{lincomplete}
The projective closure in $\Pn{1}$ of a $\delta$-variety defined by a linear ordinary differential equation in one variable is complete. 
\end{thm}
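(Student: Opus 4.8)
The plan is to invoke the valuative criterion for $\mathbb{P}^1$: to prove the projective closure $V$ of the given affine variety is $\delta$-complete, it suffices to show that for every $K$-maximal $\delta$-ring $(R,f,\m)$ and every point $p\in V(K)$ one has $p\in V(R)$. After dividing by the leading coefficient (a unit of $\cF$), I would write the defining equation in monic form $x^{(n)}=\sum_{i=0}^{n-1}a_i x^{(i)}+b$ with all $a_i,b\in\cF$, and then split into the two types of point of $V\subseteq\mathbb{P}^1$.

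The point at infinity is immediate: for $p=(1:0)$, taking $i=0$ in the definition of ``$p\in R$'' gives $p_0/p_0=1\in R$ and $p_1/p_0=0\in R$, so $(1:0)\in R$ for every $R$ (and it is irrelevant whether $V$ actually contains this point). The content is the affine case. A point $(x:1)\in V(K)$ lies in the affine trace $V\cap\An{1}$, which coincides with the original affine variety --- this is the general topological fact that the closure of a set in a subspace is the intersection of the ambient closure with that subspace, applied to the embedding $\An{1}\subseteq\mathbb{P}^1$ --- so $x\in K$ genuinely satisfies $x^{(n)}=\sum_{i=0}^{n-1}a_i x^{(i)}+b$.

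The engine is Morrison's linear-relation theorem (the first of the Morrison results quoted in Section 2): if $\frac{1}{x}\notin R$ and $x$ satisfies a relation $x^{(n)}=\sum_{i}a_i x^{(i)}+r$ with all $a_i,r\in R$, then $x\in R$. I would apply it with $r=b$. The hypothesis that must be checked is that the coefficients lie in $R$; this holds because the relevant $K$-maximal $\delta$-rings contain $\cF$ --- the language carries a constant symbol for each element of $\cF$, so any substructure occurring in van den Dries' test, and hence the $R$ supplied by the criterion, contains $\cF$ --- and the $a_i,b$ belong to $\cF$. With the hypotheses secured, the required dichotomy is automatic: either $\frac{1}{x}\in R$, or else $\frac{1}{x}\notin R$ and Morrison forces $x\in R$ (the case $x=0$ being trivial). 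Either way $(x:1)\in R$, so $p\in V(R)$, completing the verification.

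The proof is thus short once the $\mathbb{P}^1$ criterion and Morrison's theorem are in place, and there is no deep obstacle. The only steps demanding care are the bookkeeping ones on which the one-line application of Morrison's result depends: reducing to a monic equation over $\cF$, confirming $\cF\subseteq R$ so that Morrison's coefficient hypothesis is met, and identifying the affine trace of the projective closure with the original variety. Each is routine, but all three must be in place for the argument to be legitimate.
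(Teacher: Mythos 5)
Your proof is correct and takes essentially the same route as the paper: reduce the equation to monic form $x^{(n)}=\sum_{i=0}^{n-1}a_ix^{(i)}+b$ over $\cF$ and combine the $\mathbb{P}^{1}$ version of the projective valuative criterion with Morrison's linear-relation theorem (Proposition 4 of \cite{MorrisonSD}). The bookkeeping you make explicit --- the triviality of the point at infinity, the identification of the affine trace of the projective closure with the original variety, and the fact that $\cF\subseteq R$ so Morrison's coefficient hypothesis holds --- is exactly what the paper's one-line proof leaves implicit, and your justifications of those steps are sound.
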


\begin{proof}
In $\An{1}$, such a variety is defined by an equation of the form $x^{(k)}=\sum_{i=0}^{k-1} a_{i}x^{(i)} + b$, for $a_{i}, b\in \mathcal{F}$. To invoke the projective valuative criterion we must show that for any $x\in K\models DCF_{0}$ satisfying the equation and $R$ a $K$-maximal $\delta$-ring, either $x$ or $\frac{1}{x}$ belongs to $R$. But this is exactly what Proposition 4  of \cite{MorrisonSD} says. (The promised use of explicit elimination is contained in Morrison's proof, which employs an argument similar in spirit to our methods.)
\end{proof}

Next we present a useful elimination algorithm that we use to show that the projective closure of $\Po$ is $\delta$-complete.  
\begin{prop}
\label{recip_elim_var}
Let $A$ be a ring (commutative with 1), $x,y$ units in $A$, and $B$ a subring (not necessarily containing $1$) of $A$. Suppose 1 satisfies the following equations in which the expressions on the right have coefficients from $B$:
\begin{enumerate}
\item$1= \sum b_{vw}y^{v}/x^{w} + \sum b_{z}/x^{z}$ where $0<v<w$ and $z$ is non-negative.
\item$1= \sum c_{rs}x^{r}/y^{s} + \sum c_{q}/x^{q}$ where $r<s$, $s>0$ and $q$ is non-negative.
\end{enumerate}•
Then $1\in B\left[\nfracx\right]$.
\end{prop}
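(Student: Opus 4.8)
The plan is to eliminate the auxiliary unit $y$ by a single resultant computation, after a change of variable that turns both hypotheses into univariate polynomial relations whose coefficients lie in $B[1/x]$.

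First I would set $C=B[1/x]$ and $t=y/x$ (a unit of $A$, since $x,y$ are), and rewrite the two equations as polynomial relations in $t$. In equation (1) each term is $b_{vw}y^{v}/x^{w}=b_{vw}\,t^{v}x^{\,v-w}$, and because $v<w$ we have $x^{\,v-w}\in C$; collecting powers of $t$ gives $f(t):=\sum_{v\ge 1}P_{v}t^{v}+(S_{1}-1)=0$ with every $P_{v}\in C$ and $S_{1}=\sum_{z}b_{z}x^{-z}\in C$. Symmetrically, each term of equation (2) is $c_{rs}x^{r}/y^{s}=c_{rs}\,t^{-s}x^{\,r-s}$ with $x^{\,r-s}\in C$ since $r<s$; multiplying the resulting identity through by $t^{S}$ (where $S$ is the largest $s$ appearing) yields $g(t):=(S_{2}-1)t^{S}+\sum_{s\ge 1}Q_{s}t^{\,S-s}=0$ with every $Q_{s}\in C$ and $S_{2}\in C$. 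Thus the hypotheses $v<w$ and $r<s$ are precisely what forces all coefficients, apart from the two ``exceptional'' ones $S_{1}-1$ and $S_{2}-1$, into $C$, and the goal reduces to showing $1\in C$. (If either sum of $y$-terms is empty the corresponding equation already reads $1\in C$, so I may assume $V:=\deg f\ge 1$ and $S:=\deg g\ge 1$.)

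The second step is a formal elimination of $t$ between $f$ and $g$. Let $C^{+}=C+\Z\cdot 1$ be the subring of $A$ generated by $C$ and $1$; since $C$ is closed under multiplication and under integer scaling, $C$ is an \emph{ideal} of $C^{+}$, and $1\in C$ if and only if $1\equiv 0$ in $C^{+}/C$. All coefficients of $f$ and $g$ lie in $C^{+}$. I form the $(V+S)\times(V+S)$ Sylvester matrix $M$ of $f,g$ (taken with their formal degrees $V,S$), whose rows are the coefficient vectors of $t^{i}f$ for $0\le i<S$ and of $t^{i}g$ for $0\le i<V$. Since $f(t)=g(t)=0$ in $A$, every one of these polynomials vanishes at our $t$, so $M\mathbf{w}=0$ where $\mathbf{w}=(1,t,\dots,t^{V+S-1})^{\top}$. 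Multiplying by the adjugate gives $\det(M)\,\mathbf{w}=0$, and reading off the coordinate of $t^{0}=1$ yields $\det(M)=0$ in $A$, hence in $C^{+}$.

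Finally I would reduce $\det(M)=0$ modulo the ideal $C$. In $(C^{+}/C)[t]$ every $P_{v},Q_{s},S_{1},S_{2}$ vanishes, so $\bar f\equiv -1$ and $\bar g\equiv -t^{S}$ (each retained at its declared formal degree). The Sylvester matrix of $-1$ and $-t^{S}$ has a single entry $-1$ in each row, sitting in distinct columns, so it is a signed permutation matrix and $\det(\bar M)=(-1)^{V+S}$. Therefore $0=\overline{\det(M)}=\det(\bar M)=(-1)^{V+S}$ in $C^{+}/C$, which forces $1\in C=B[1/x]$, as required. The main obstacle is the bookkeeping in the first step: one must check that after the substitution $t=y/x$ every coefficient of $f,g$ genuinely lands in $C$ (exactly the role of $0<v<w$ and $r<s$) while the two exceptional coefficients become units modulo $C$; once that holds the elimination is the routine Sylvester/adjugate argument. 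A secondary subtlety is that $B$, and hence $C$, need not contain $1$, which is why the final reduction is performed in $C^{+}/C$ rather than in a quotient of $C$ itself. (One could instead try a downward induction on $V+S$, multiplying the two relations and splitting the product into its $t$-graded pieces to lower both degrees; but keeping the relations ``pure'' across the induction is fiddly, so I prefer the one-shot resultant computation.)
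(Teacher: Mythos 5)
Your proof is correct, but it takes a genuinely different route from the paper's. The paper argues algorithmically: it alternately substitutes one relation into the other (dividing the type I equation by $y^{u}$, multiplying the type II equation by $y^{t}$), verifies that the inequalities $v<w$ and $r<s$ are preserved by each substitution, and terminates by a well-ordering argument because the maximal $y$-exponent strictly drops at every iteration; the output is an explicit representation $1=\sum b_{i}/x^{q_{i}}$ with $b_{i}\in B$, $q_{i}\geq 0$. You instead do a one-shot elimination: the substitution $t=y/x$ is a clean device that converts the two hypotheses $v<w$ and $r<s$ into the single statement that every coefficient of $f$ and $g$ lies in $C=B[1/x]$ except the two exceptional ones, which are congruent to $-1$ modulo $C$; the adjugate identity applied to the Sylvester matrix then gives $\det(M)=0$ in $A$ (valid regardless of whether the formal leading coefficients vanish, which is exactly why working with formal degrees $V,S$ is harmless here — you need $\det(M)=0$, not a nonvanishing resultant), and reduction modulo the ideal $C$ of $C^{+}=C+\Z\cdot 1$ forces a unit to lie in $C$. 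The supporting details check out: $C$ is indeed an ideal of $C^{+}$ since it is a (possibly non-unital) subring closed under integer multiples, and the degenerate cases with an empty $y$-part are handled up front, which guarantees $V\geq 1$ and $S\geq 1$ so the Sylvester matrix makes sense. One cosmetic quibble: in the standard convention $\det(\bar M)$ is $(-1)^{V+S}$ times the sign of a block-swap permutation, so your stated sign may be off by $(-1)^{VS}$; this is immaterial, since $\pm 1\in C$ yields $1=(\pm 1)^{2}\in C$ because $C$ is multiplicatively closed. As for what each approach buys: the paper's iteration is elementary and effective, producing the explicit $B$-combination of powers of $1/x$ that is used downstream (integrality of $x$ over $R$ — your conclusion suffices for that as well, since integer coefficients lie in $R\supseteq\Q$), and it matches the paper's emphasis on explicit elimination; your resultant argument is shorter, eliminates the termination bookkeeping entirely, and isolates the real content of the hypotheses as a unit-determinant condition modulo $C$.
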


\begin{proof}
We refer to expressions having the form of the first equation as \emph{type I equations}; monomials satisfying the corresponding conditions are \emph{type I terms}. The label \emph{type II} analogously applies to the second equation and its terms. We are only concerned with preservation of these conditions, so we do not need to track each coefficient or write all indices. In keeping with this, we use a placeholder symbol $c$ for the coefficients; this does not mean that they are identical, but simply that some coefficient from $B$ is present. Similarly, once we observe that a condition is preserved, we may reuse notation for terms of the same form. For example, rather than expanding $\left(1+\sum c/x^{q}\right)\left(1+\sum c/x^{q}\right)$, we simply write $1+\sum c/x^{q}$ again to represent the product, which is a sum of 1 and terms having coefficients from $B$ and a non-negative power of $x$ in the denominator.

Let $u$ be the maximal exponent of $y$ (in the numerator) in the type I equation, and $t$ the maximal exponent of $y$ (in the denominator) in the type II equation. If there is no $y$ in an equation, then respectively $u$ or $t$ is 0. There are two cases:
\begin{enumerate}
\item $u\leq t$: Subtract the $y$-free terms in the type I equation from both sides and divide by $y^{u}$ to obtain $(1 + \sum c/x^{z})/y^{u} = \sum cy^{v-u}/x^{w} = \sum c/ x^{w}y^{u-v}$. Multiply the type II equation by $(1 + \sum c/x^{z})$. Note that multiplying a type II term by $1+ \sum c/x^{z}$ preserves the type II requirements on the exponents of $x$ and $y$. Hence we may reuse the notation for the terms $cx^r/y^s$ such that $s<u$. Substituting for  $(1 + \sum c/x^{z})/y^{u}$ we obtain 
\begin{align*}
1&= \sum c/x^{q} + \sum_{0<s< u} cx^{r}/y^{s} + \left(\sum_{u\leq s}cx^{r}/y^{s-u}\right)\left(\sum  c/ x^{w}y^{u-v}\right) \\ &=  \sum c/x^{q} + \sum_{0<s< u} cx^{r}/y^{s} + \sum_{u\leq s} \sum cx^{r-w}/y^{s-v}
\end{align*}

\noindent It remains to verify the requirements on the exponents of terms for which $u\leq s$. For those, $v<w$ and $r<s$, so $r-w < s-v$. If $s=v$, the term becomes $y$-free with $x$ in the denominator. Otherwise, because $0< v$ the exponent of $y$ decreases but remains positive. Therefore we have a new type II equation but with a smaller maximal exponent of $y$ (the exponents of $y$ in $\sum_{0<s<u} cx^{r}/y^{s}$ are less than $u$ and hence than $t$). Thus after finitely many iterations there will no longer be an $s$ such that $u\leq s$ and we are guaranteed to move to the second case.  

\item $t< u$: Subtract the $y$-free terms in the type II equation from both sides and multiply by $y^{t}$ to obtain $(1+ \sum c/x^{q})y^{t} = \sum cx^{r}y^{t-s}$. Multiply the type I equation by $(1+ \sum c/x^{q})$. As before, this multiplication does not disturb the required balance of exponents and so we only alter the notation for terms divisible by $y^{t}$. We find 
\begin{align*}
1&= \sum c/x^{z} + \sum_{0<v<t}c y^{v}/x^{w} + \left(\sum_{t\leq v} cy^{v-t}/x^{w}\right)\left(\sum cx^{r}y^{t-s}\right) \\ &= \sum c/x^{z} + \sum_{0<v<t} cy^{v}/x^{w} + \sum_{t\leq v}\sum cy^{v-s}/x^{w-r}
\end{align*}

\noindent For the terms such that $t\leq v$, the exponent of $y$ is still non-negative because $s\leq t \leq v$. Again, $v<w$ and $r<s$, so $v-s< w-r$. Since $s>0$, we have a new type I equation with a lower maximal exponent of $y$ (the exponents of $y$ in $\sum_{0<v<t} y^{v}/x^{w}$ are less than $t$ and hence than $u$). Consequently, after finitely many iterations, there will no longer be a $v$ such that $t<v$ and we return to the first case.
\end{enumerate}

At the end of each step, we still have equations of type I and II. Because either $t$ or $u$ decreases with each iteration of the algorithm, either $u$ or $t$ eventually becomes 0. (The types do not allow $u$ or $t$ to be negative.) If $u$ or $t$ is 0, then in the remaining terms $x$ only appears in the denominator. This proves $1\in B[1/x]$.
 
\end{proof}

\begin{prop}
The projective closure of $\Po$ is $\delta$-complete.
\end{prop}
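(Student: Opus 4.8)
The plan is to invoke the projective valuative criterion in its $\Pn{1}$ form. Writing a $K$-point of the projective closure of $\Po$ as $(x:1)$ with $xx''=x'$, it suffices to show that for every $K$-maximal $\delta$-ring $(R,f,\m)$ we have $x\in R$ or $\nfracx\in R$. First I would dispose of the degenerate points: if $x=0$ then $(0:1)\in R$ trivially, and if $x'=0$ then $x$ is a constant, so all derivatives of $\nfracx$ vanish, $\monex=\m[\nfracx]$, and a relation $1\in\monex$ already forces $x$ to be integral over $R$. So assume $x,x'\neq 0$ and argue by contradiction, supposing neither $x$ nor $\nfracx$ lies in $R$; by the basic property of maximal $\delta$-rings this gives $1\in\mx$ and $1\in\monex$ simultaneously.

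The engine of the proof is the defining equation used as $x''=x'/x$, which lets me rewrite every derivative as $x^{(k)}=P_{k}(x')/x^{k-1}$ for an explicit integer polynomial $P_{k}$ (with $\deg P_{k}=k-1$, and $P_{k}$ divisible by $x'$ for $k\ge 2$). Consequently both $\mx$ and $\monex$ land inside $\m$-combinations of Laurent monomials in the two units $x$ and $y:=x'$. Carrying this out, $1\in\monex$ unwinds cleanly: since $\left(\nfracx\right)^{(k)}=Q_{k}(x')/x^{k+1}$ with $\deg Q_{k}=k<k+1$, every derivative-containing monomial contributes a term $y^{v}/x^{w}$ with $0<v<w$, while pure powers give the $y$-free terms, so the relation takes exactly the \emph{type I} shape $1=\sum b_{vw}\,y^{v}/x^{w}+\sum b_{z}/x^{z}$ with coefficients $b\in\m$. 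The aim is then to produce the complementary \emph{type II} equation $1=\sum c_{rs}\,x^{r}/y^{s}+\sum c_{q}/x^{q}$ (with $r<s$, $s>0$, coefficients in $\m$), feed both into Proposition \ref{recip_elim_var} with $A=K$, units $x,y$, and $B=\m$, and read off its conclusion $1\in\m[\nfracx]$.

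Once $1\in\m[\nfracx]$ is in hand the finish is routine. Writing $1=\sum_{i=0}^{N}m_{i}x^{-i}$ with $m_{i}\in\m$ and clearing denominators yields $(1-m_{0})x^{N}=\sum_{i\ge 1}m_{i}x^{N-i}$; since $R$ is local, $1-m_{0}$ is a unit, so dividing through exhibits $x$ as a root of a monic polynomial over $R$. Thus $x$ is integral over $R$, and by Morrison's theorem that a maximal $\delta$-ring is integrally closed in its fraction field we conclude $x\in R$, contradicting $x\notin R$. (If instead $N=0$ the relation reads $1=m_{0}\in\m$, which is already absurd.)

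The main obstacle is manufacturing the type II equation. All of the ``obvious'' relations---those arising from $\mx$, from $\monex$, or even from $\m\{x'\}$---place $x'$ only in \emph{numerators}, because the differential relation invariably expresses higher derivatives with $x'$ up and $x$ down; hence none of them is literally of type II. Producing an honest relation with $x'$ in the denominator is really an elimination of $x'$, and I expect this to be where the genuine work lies: one must exploit $x'=xx''$ (equivalently $\nfracx=x''/x'$) to drive a power of $x'$ into the denominator, after first clearing away the easy case $x'\in R$---there $x'=x^{(1)}\in R$ and Morrison's Corollary immediately gives $x\in R$ or $\nfracx\in R$. Checking that the exponent inequalities $r<s$ and $s>0$ are actually met after this manipulation is the delicate point, and it is precisely the bookkeeping that Proposition \ref{recip_elim_var} is designed to absorb.
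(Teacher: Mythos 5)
Your outline reproduces the paper's strategy faithfully up to the decisive step, and then stops exactly where the real work begins: you never construct the type II equation. Worse, your closing claim that checking the exponent inequalities ``is precisely the bookkeeping that Proposition \ref{recip_elim_var} is designed to absorb'' misreads that proposition. Proposition \ref{recip_elim_var} takes a type I equation \emph{and} a type II equation as hypotheses and concludes $1\in B\left[\nfracx\right]$; it does nothing whatsoever to manufacture the type II relation. Since, as you yourself correctly observe, every relation arising from $\mx$, $\monex$, or $\m\{x'\}$ places $x'$ only in numerators, the existence of a type II relation is the entire content of the argument and cannot be left as ``where I expect the genuine work lies.'' As it stands, your proof has a hole precisely at its load-bearing point.

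For comparison, the paper closes this gap by running an explicit elimination on the type I equation itself. Differentiate $1=\sum c_{ij}(x')^{j}/x^{i}$, substitute $x''=x'/x$, and add $\frac{x}{Ix'}$ times the resulting identity to the original equation so as to cancel the term $c_{IJ}(x')^{J}/x^{I}$ that is maximal for $\mathrm{lex}(\deg(x'),\deg(x))$ (division by $I$ is legitimate because $I>J>0$, and by $x'$ because $x'\neq 0$). One then verifies that $1$ survives, that every surviving term is strictly smaller in the ordering, so the process terminates by well-ordering, and---the delicate exponent count you flagged---that in the new terms $\frac{c_{ij}'}{I}(x')^{j-1}/x^{i-1}$ the exponents of $x'$ and $x$ drop in tandem, so since $j<i$ in every type I term a descendant acquires $x'$ in the denominator before $x$ can reach the numerator. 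The terminal relation is therefore an honest type II equation witnessing $1\in\m\left[x,\nfracx,\frac{1}{x'}\right]$, and only at that point does Proposition \ref{recip_elim_var} combine it with the original type I equation to yield $1\in\m\left[\nfracx\right]$. The rest of your write-up is sound: the degenerate cases, the derivation of the type I shape from $1\in\monex$ (your explicit $(1/x)^{(k)}=Q_{k}(x')/x^{k+1}$ matches the paper's observation that differentiation preserves the excess of $x$ in the denominator), and the integrality finish via locality of $R$ and Morrison's integral closedness; your treatment of $x'=0$ through $\monex=\m\left[\nfracx\right]$ is a correct variant of the paper's direct appeal to Morrison. But without the differentiation-and-cancellation step the proof is incomplete.
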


\begin{proof}
We use the projective valuative criterion for $\Pn{1}$. Let $(R,\m)$ be a $K$-maximal $\delta$-ring and let $p=(x:1)\in V(K)$, where $V$ is the projective closure of $\Po$. We must show that either $x\in R$ or $\nfracx\in R$. This is immediate if $x=0$; otherwise, if $x'=0$ then Morrison's result implies that either $x\in R$ or $\nfracx \in R$. So suppose that $x,x'$ are non-zero and also that neither $x$ nor $\nfracx$ belongs to $R$. As a result, we know that both $1\in\mx$ and $1\in \monex$.

We focus on $1\in \monex$. Because $x$ and $x'$ are non-zero, we may divide by both of them, as well as substitute $\nfracxpx$ for $x''$. Differentiating a monomial $(x')^{j}/x^{i}$ such that $i>j\geq0$ results in a sum of monomials such that each is a multiple of $(x')^{j}/x^{i}$ times either $x'/x$ (when differentiating $1/x$) or $1/x$ (when differentiating $x'$ and applying the substitution). This preserves the excess of $x$ in the denominator. As $\m$ is a local ring, we may always subtract terms that are elements of $\m$ from 1 and divide (we continue to do so without comment for any free elements of $\m$ that appear during the upcoming elimination). Hence we may assume that there are coefficients $c_{ij}\in \m$ and pairs of exponents $i>j\geq 0$ such that $1=\sum c_{ij}(x')^{j}/x^{i}$. Note that this satisfies the definition of a type I equation (with $x'$ playing the role of $y$) in the preceding proposition. Our strategy is to show that modulo differentiation and substitution of $\nfracxpx$ for $x''$, this equation implies $1\in \m\left[\nfracx\right]$ (and hence $x$ is integral over $R$). That will provide a contradiction because we know from Morrison that $R$ is integrally closed, so $x\in R$.

We eliminate $x'$ and show that the resulting relation $1\in \m[x,\nfracx, \frac{1}{x'}]$ is witnessed by a type II equation. Then the elimination algorithm of Proposition \ref{recip_elim_var} applied to the original type I equation and the resulting type II equation will imply that $1\in \m\left[\nfracx\right]$, as desired. 
 
Differentiate $1=\sum c_{ij}(x')^{j}/x^{i}$ and invoke the substitution to get
\begin{align*}
0=\sum c_{ij}'(x')^{j}/x^{i} +\sum(-i)c_{ij}(x')^{j+1}/x^{i+1} + \sum (j)c_{ij}(x')^{j}/x^{i+1}.
\end{align*}

\noindent We target the term $c_{IJ}(x')^{J}/x^{I}$ of the original equation that is maximal with respect to the monomial ordering $lex(\text{deg}(x'),\text{deg}(x))$, where the degrees of $x',x$ are taken with respect to $x'$ in the numerator and $x$ in the denominator. That is, first eliminate the term with the highest degree $I$ of $x$ in the denominator out of those terms having the highest overall degree $J>0$ of $x'$ in the numerator. (If  $x'$ does not appear, $1\in\m\left[\nfracx\right]$ already.). Do this by adding the following equation to the original:
\begin{align*}
0&=\left(\frac{x}{Ix'}\right)\left(\sum c_{ij}'(x')^{j}/x^{i} +\sum(-i)c_{ij}(x')^{j+1}/x^{i+1} + \sum (j)c_{ij}(x')^{j}/x^{i+1}\right)\\
&=\sum \frac{c_{ij}'}{I}(x')^{j-1}/x^{i-1} +\sum\frac{-i}{I}c_{ij}(x')^{j}/x^{i} + \sum \frac{j}{I}c_{ij}(x')^{j-1}/x^{i}.
\end{align*}  

\noindent We carefully examine the consequences of this elimination step.
\begin{itemize}

\item Claim: 1 is preserved. Confirmation: We added 0 to the original equation, and 1 does not appear among the added terms.

\item Claim: $c_{IJ}(x')^{J}/x^{I}$ was eliminated. Confirmation: Since $I>J>0$, dividing by $I$ was valid and the summands included the canceling term $-c_{IJ}(x')^{J}/x^{I}$.

\item Claim: Every remaining term is strictly less than $c_{IJ}(x')^{J}/x^{I}$ with respect to $lex(\text{deg}(x'),\text{deg}(x))$. Confirmation: The remaining terms from the original equation were already less than $c_{IJ}(x')^{J}/x^{I}$ in this ordering, and they didn't change. The new terms that were added all saw the exponent of $x'$ decrease, except for $\frac{-i}{I}c_{ij}(x')^{j}/x^{i}$. But these stayed the same as before, and so for $i\neq I$ are still less than $c_{IJ}(x')^{J}/x^{I}$.  In particular, among the new terms there is only one copy of the monomial $(x')^{J}/x^{I}$, so after cancellation of that term only lesser ones remain. Note that relations leading to cancellations among the other terms cause no problem; they can only hasten the elimination of non-type-II terms.    

\item Claim: All exponents of $x$ have stayed the same except those in the terms $\frac{c_{ij}'}{I}(x')^{j-1}/x^{i-1}$. Confirmation: This is immediate from the expression. The important thing to note is that the exponent of $x'$ in the numerator has decreased by the same amount. Since $0\leq j<i$, repetition of this operation will place $x'$ in the denominator before it places $x$ in the numerator. In other words, the descendants of $c_{ij}(x')^{j}/x^{i}$ that have $x$ in the numerator must have a larger power of $x'$ in the denominator; i.e., they will be type II terms. (Note that $c/(x^{i}(x')^{j})$ for $i,j$ non-negative and $c\in \m$ is type II; we don't need to have a positive power of $x$ in the numerator.)

Repeat until all terms with $x'$ in the numerator are eliminated. This process terminates because $lex(\text{deg}(x'),\text{deg}(x))$ is a well-ordering. Since 1 is preserved at each stage and the terms that are left are all type II terms, we have a type II equation. This completes the proof. 

\end{itemize}
\end{proof}

\end{section}

\begin{section}{Incomplete finite-rank differential varieties}
Success with the examples in the preceding section makes it tempting to think that all finite-rank projective $\delta$-varieties over a model of $DCF_{0}$ are $\delta$-complete. Such a result would be satisfying because then infinite rank would be the only obstruction distinguishing differential completeness from the algebraic case.  However, the $\delta$-completeness issue turns out to be more complex than that. The prevalence of incomplete, finite-rank $\delta$-varieties is still not clear, but we have found one family. 
(Recall that the \emph{separant} $s_{p}$ of a differential polynomial $p\in \mathcal{F}\{x\}$ is defined to be the formal partial derivative of $p$ with respect to the highest-order derivative appearing in $p$. For example, the separant of $3x(x')^{2} + x' -x$ is $6xx'+1$.)

\begin{prop}
The projective closure of $x''=x^{n}$ is not $\delta$-complete for $n\geq 2$.
\end{prop}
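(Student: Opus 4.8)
The plan is to apply the projective valuative criterion for $\Pn{1}$ in its contrapositive form: to prove that $V$, the projective closure of $x''=x^{n}$, is not $\delta$-complete, it suffices to produce a single $K$-maximal $\delta$-ring $(R,f,\m)$ and a single point $p=(x:1)\in V(K)$ for which neither $x$ nor $\nfracx$ lies in $R$. Using the characterization $x\in K\setminus R \iff 1\in\mx$, this amounts to exhibiting a maximal $\delta$-ring together with a solution $x$ of $x''=x^{n}$ satisfying \emph{both} $1\in\mx$ and $1\in\monex$.

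The engine of the construction is the first integral. Multiplying $x''=x^{n}$ by $x'$ and integrating shows that
\[
C \;=\; (x')^{2}-\tfrac{2}{n+1}\,x^{n+1}
\]
is a $\delta$-constant on $V$; for a generic solution it is transcendental over the constants of $\cF$, so $K=\cF\langle x\rangle=\cF(x,x')$ has transcendence degree $2$. I would then choose $R$ so that $C$ is ``infinitely large'' while $x$ stays a unit — concretely, so that $z:=1/C\in\m$ but $x,\nfracx\notin\m$. At the level of valuations this is the place of $\cF(x,x')$ with $v(x)=0$, $v(C)<0$, hence $v(x')=\tfrac12 v(C)<0$, which is consistent with the first integral; geometrically it is the limiting $\delta$-ring obtained by rescaling a solution near a simple zero so that the zero-crossing sits at bounded $x$ but with $x'\to\infty$. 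Crucially this is \emph{not} a pole of $x$, which is the point where the naive construction collapses (there $\nfracx$ re-enters $R$).

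Granting such an $R$, the exclusion of $x$ is immediate: since $z=1/C\in\m$, the first integral rearranges to
\[
1 \;=\; z\,(x')^{2}-\tfrac{2}{n+1}\,z\,x^{n+1},
\]
and every summand on the right lies in $\mx$ (because $z\in\m\subseteq\mx$, while $x'=\delta x$ and $x^{n+1}$ lie in the $\delta$-ring $\mx$). Hence $1\in\mx$ and $x\notin R$, uniformly for all $n\ge 2$.

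The exclusion of $\nfracx$ is the hard part, and I expect it to be the main obstacle. Writing $u=\nfracx$, the same first integral becomes $(u')^{2}=C u^{4}+\tfrac{2}{n+1}u^{3-n}$, and multiplying through by $z$ only yields relations of the shape $u^{k}\in\monex$ for some $k\ge1$, which are trivially true and say nothing about whether $1\in\monex$. Thus the clean cancellation that disposes of $x$ has no mirror image for $\nfracx$, and establishing $1\in\monex$ demands a genuine elimination argument — of the same flavor as Proposition \ref{recip_elim_var} and the proof that the projective closure of $\Po$ is complete, but now driven so as to \emph{force} $1$ into $\monex$ rather than to block it. I would organize this around the observation that the (cleared) second-order equation governing $u=\nfracx$, namely $u\,u''=2(u')^{2}-u^{3-n}$, has separant vanishing at $u=0$ — i.e.\ at the point at infinity of $V$. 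This degeneracy is exactly what defeats the Morrison-style extension theorems that yielded completeness in Section 4, so it is what should prevent $f$ from extending to $\nfracx$. The points left to verify are that $R$ can truly be taken maximal with $z\in\m$ and $x$ a unit, that the residue of $x$ stays finite and nonzero (so the phenomenon does not degenerate into an ordinary pole), and that the elimination terminates in $1\in\monex$ for every $n\ge2$.
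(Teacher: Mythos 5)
Your plan is the contrapositive of the projective valuative criterion, and its first half is sound: for a generic solution the $\delta$-constant $C=(x')^{2}-\tfrac{2}{n+1}x^{n+1}$ is transcendental over $\cF$, so $z=1/C$ with $z'=0$ admits the $\delta$-homomorphism $\cF[z]\to\cF$, $z\mapsto 0$, which Zorn extends to a $K$-maximal $(R,f,\m)$ with $z\in\m$; then $1=z(x')^{2}-\tfrac{2}{n+1}zx^{n+1}\in\mx$ gives $x\notin R$, exactly as you say. But the argument stops precisely where it had to begin: you never establish $1\in\monex$, i.e.\ that $\nfracx\notin R$, and the criterion needs \emph{both} exclusions for a single $R$. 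The Zorn construction gives you no control over this: nothing you have arranged prevents the maximal extension of $(\cF[z],z\mapsto 0)$ from containing $\nfracx$. In fact, writing $u=\nfracx$, the first integral yields $u^{n-3}(u')^{2}=Cu^{n+1}+\tfrac{2}{n+1}$ for $n\geq 3$ (and analogously $(u')^{2}=Cu^{4}+\tfrac{2}{3}u$ for $n=2$), so if $u\in R$ then $Cu^{n+1}\in R$, hence $u^{n+1}=z\cdot\bigl(Cu^{n+1}\bigr)\in R$ and $f(u)^{n+1}=f(z)\,f\bigl(Cu^{n+1}\bigr)=0$; that is, $u\in\m$ and the place is simply a pole of $x$, perfectly consistent with everything in your setup. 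Your requirement that ``the residue of $x$ stays finite and nonzero'' is exactly the unproved content, and the observation that the separant of $uu''=2(u')^{2}-u^{3-n}$ vanishes at $u=0$ is a heuristic for \emph{why} extension might fail, not a proof that it does. As written, this is a research program whose decisive step — which you candidly flag in your last sentence — is missing.

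For comparison, the paper runs on exactly your first integral but packages it so that no maximal $\delta$-ring is needed. It defines the closed set $W$ in $V\times\An{1}$ by $x''=x^{n}$ and $y\bigl(2x^{n+1}-(n+1)(x')^{2}\bigr)=1$ (your $z$ is this $y$, up to the factor $-(n+1)$), and shows $\pi_{2}(W)=\{y\mid y'=0 \text{ and } y\neq 0\}$: one containment by differentiating the unit relation, substituting $x''=x^{n}$, and multiplying by $y$; the other by Blum's axioms, which produce a solution with $2x^{n+1}\neq 1/y$, so $x'\neq 0$ and the differentiated relation can be divided by $x'$ to recover $x''=x^{n}$. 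Since $y'=0$ defines an irreducible $\delta$-variety (its separant is $1$) containing $0$, the image is not Kolchin-closed, and incompleteness follows after checking that the affine variety is already projectively closed (immediate for $n\geq 3$; the appendix handles $n=2$). So the repair is not to push the elimination toward $1\in\monex$ — which may be as hard as the theorem itself — but to convert your relation $zC=1$ directly into a witness subvariety of $V\times\An{1}$ whose projection is the set of nonzero constants.
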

\begin{proof}
Let $\cF\models DCF_{0}$ and let $n\geq 2$. Consider the subset $W$ of $\An{1}(\cF)\times\An{1}(\cF)$ defined by $x''=x^{n}$ and $y(2x^{n+1}-(n+1)(x')^{2})=1$. We claim that $\pi_{2}(W)=\{y\mid y'=0 \text{ and } y\neq 0\}$. That set is not $\delta$-closed because $y'=0$ is an irreducible $\delta$-variety containing the point 0. (Note that $y'$ is irreducible as a polynomial. We use the notation $[f]$ to represent the differential ideal generated by the $\delta$-polynomial $f$.  By Corollary 1.7 (p. 44) of \cite{Marker:model_thy_of_fields}, it follows that the differential ideal $I(y')= \{g \in  \cF\{x\}\mid s_{y'}^{k}g\in [y'] \text{ for some }k\}$ is prime. The separant $s_{y'}$ of $y'$ is 1, so $I(y')=[y']$ is prime and $\mathbf{V}([y'])=\mathbf{V}(y'=0)$ is an irreducible $\delta$-variety.) This suffices to prove incompleteness of the projective closure because $x''=x^{n}$ is already projective if $n\geq 2$. (The proof for the case $n=2$ is in the appendix. For $n\geq 3$, note that the point at infinity does not lie on the $\delta$-homogenization of $x''=x^{n}$.)

For the first containment, suppose $y\in \pi_{2}(W)$. Let $(x,y)\in W$, and differentiate the equation $y(2x^{n+1}-(n+1)(x')^{2})=1$. Substituting $x^{n}$ for $x''$, we find $y'(2x^{n+1}-(n+1)(x')^{2}) + y(2(n+1)x^{n}x'-2(n+1)x'x^{n})=  y'(2x^{n+1}-(n+1)(x')^{2})= 0$. Multiplying both sides by $y$ and applying the relation $y(2x^{n+1}-(n+1)(x')^{2})=1$ gives $y'=0$; clearly $y\neq 0$.

For the other containment let $y$ be a non-zero element of $\mathcal{F}$ such that $y'=0$. We need to find $x$ such that $x''=x^{n}$ and $2x^{n+1}-(n+1)(x')^{2}=\frac{1}{y}$. Blum's axioms for $DCF_{0}$ imply that there exists $x$ such that simultaneously $2x^{n+1}-(n+1)(x')^{2}=\frac{1}{y}$ and $2x^{n+1}\neq \frac{1}{y}$. Therefore $x'\neq 0$. Differentiation produces $2(n+1)x^{n}x'-2(n+1)x'x''=0$. Because $x'\neq 0$, we may divide and obtain $x''=x^{n}$. Hence $y\in \pi_{2}(W)$.  
\end{proof}

In retrospect, this example has a similar flavor to Kolchin's example proving incompleteness of $\Pn{1}$. Both instances feature differentiation of polynomials having constant term equal to 1 as well as the use of Blum's axioms to prevent the separant from vanishing. The non-closed image in our example has finite rank, though, because of the additional constraint imposed by the relation $x''=x^{n}$. Thus not only is having too few relations (i.e., infinite rank) deleterious to completeness, but having too many of the wrong kind is also an obstacle. To illustrate, consider the example in the case $n=3$. The image is not closed because the derivative of $2x^{4}-4(x')^{2}$ happens to belong to the $\delta$-ideal $[x''-x^{3}]$, though $2x^{4}-4(x')^{2}$ itself does not. 

\end{section}

\begin{section}{Appendix}

\begin{prop}
The projective closure in $\mathbb{P}^1$ of the affine $\delta$-variety defined by $x''=x^2$ is the affine variety itself. \end{prop}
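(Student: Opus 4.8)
The plan is to reduce the statement to a single point. The standard chart $\{x_{1}\neq 0\}\cong\An{1}$ covers all of $\Pn{1}$ except the point at infinity $(1:0)$, and the subspace-closure identity $\overline{V}\cap\An{1}=\overline{V\cap\An{1}}^{\,\An{1}}=V$ (valid since $V$ is already Kolchin-closed in $\An{1}$) shows that the projective closure satisfies $\overline{V}=V\cup(\overline{V}\cap\{(1:0)\})$. Hence it is enough to prove $(1:0)\notin\overline{V}$. For this I would produce a single $\delta$-homogeneous polynomial $G$ with $V\subseteq\mathbf{V}(G)$ and $G$ not vanishing at $(1:0)$; then $\overline{V}\subseteq\mathbf{V}(G)$ excludes the point at infinity and forces $\overline{V}=V$.

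The heart of the argument is the construction of $G$, and the natural place to look is the complementary chart $\{x_{0}\neq 0\}$, with affine coordinate $u=x_{1}/x_{0}=1/x$, in which $(1:0)$ becomes the constant point $u\equiv 0$. First I would rewrite the defining equation there: substituting $x=1/u$ and clearing denominators turns $x''=x^{2}$ into $E:=uu''-2(u')^{2}+u=0$. Although the constant point $u\equiv 0$ satisfies $E$, it is not where genuine solutions go at infinity. Forming $E'=uu'''-3u'u''+u'$ and eliminating the factor $u$ by means of the identities $u(u''+1)=2(u')^{2}$ and $uu'''=u'(3u''-1)$ produces the relation
\[
q:=2u'u'''-3(u'')^{2}-2u''+1 ,
\]
for which a direct computation yields $u\cdot q=0$ modulo $E$ and $E'$. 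Thus $q$ vanishes on every solution with $u\neq 0$, while $q$ evaluated at the constant point $u\equiv 0$ equals its constant term $1$; indeed, setting $u'=0$ in $q=0$ forces $(3u''-1)(u''+1)=0$, so $u''\in\{1/3,-1\}$, confirming that the honest boundary behavior avoids $u''=0$. Homogenizing $q$ (clearing the powers of $x_{0}$ appearing in the denominators) then gives the desired $\delta$-homogeneous $G$, and the monomial arising from the constant term of $q$ is a pure power of $x_{0}$, so $G$ does not vanish at $(1:0)$.

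It remains to check that $G$ vanishes on all of $V$, including the origin $(0:1)$, where the substitution $u=1/x$ is invalid. Here I would use that the separant of $x''-x^{2}$ with respect to $x''$ equals $1$; exactly as in the $y'$ computation of the previous section, this makes $[x''-x^{2}]$ prime and $V$ Kolchin-irreducible. Since $G$ vanishes on $V\setminus\{x=0\}$ (the locus where $q$ applies) and this is a nonempty open, hence dense, subset of the irreducible $V$, the polynomial $G$ vanishes on the Kolchin closure $V$. Combining this with $G(1:0)\neq 0$ gives $(1:0)\notin\overline{V}$ and therefore $\overline{V}=V$. I expect the main obstacle to be the discovery and verification of $q$: one must choose the correct elimination of $u$ from $E$ and $E'$ and confirm both that the resulting $\delta$-polynomial lies in the radical $\delta$-ideal of the affine solutions and that it carries a nonzero constant term. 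The conceptual reason such a $q$ exists is that the elliptic-function solutions of $x''=x^{2}$ have double poles, so near infinity $u''\to 1/3\neq 0$, separating the genuine solutions from the spurious constant point $(1:0)$ even though the latter satisfies the naive $\delta$-homogenization of $x''-x^{2}$.
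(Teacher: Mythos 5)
Your proposal is correct, and its computational heart coincides with the paper's: your $q=2u'u'''-3(u'')^{2}-2u''+1$ is, up to sign and renaming $u$ as $y$, the paper's $p_{1}=3(y'')^{2}-1-2y'''y'+2y''$; your $E$ and $E'$ are its $-p_{2}$ and $-p_{3}$; and your membership $uq\in(E,E')$ is exactly the paper's identity $yp_{1}=(-3y''+1)p_{2}+2y'p_{3}$, which your two rewriting steps verify correctly (the paper found $p_{1}$ via Maple's elimination-ideal computation, whereas you derive it by hand, with a sound pole-order heuristic behind it: $u''\to 1/3$ at double poles matches your factorization $(3u''-1)(u''+1)=0$ when $u'=0$). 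Where you genuinely diverge is the packaging. The paper proves closedness of $V$ in $\mathbb{P}^{1}$ chart by chart, citing the local criterion from Shafarevich, and shows that the constructible set $p_{2}=0\wedge y\neq 0$ on the chart at infinity equals the closed set $p_{1}=p_{2}=0$: the identity gives one containment, and the constant term $-1$ in $p_{1}$ excludes $y=0$. You instead bypass the two-chart argument by exhibiting a single $\delta$-homogeneous certificate $G$ (the homogenization of $q$) with $G(1:0)\neq 0$, concluding $(1:0)\notin\overline{V}$ directly; the price is an extra ingredient the paper's proof of this proposition does not use, namely irreducibility of $V$ (separant of $x''-x^{2}$ equal to $1$, hence $[x''-x^{2}]$ prime by the same Corollary 1.7 of Marker that the paper invokes for $y'$) together with density of $V\setminus\{x=0\}$, in order to see that $G$ also vanishes at $(0:1)$, where the substitution $u=1/x$ is unavailable. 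That density step is legitimate as you state it, since $\mathbf{V}(G)\cap\mathbb{A}^{1}$ is Kolchin-closed and $V$ has nonzero points. In exchange for the added irreducibility hypothesis, your route makes the logic of ``the closure adds nothing'' more transparent, resting on one explicit witness excluding the point at infinity; the paper's route needs no irreducibility and yields, as a by-product, explicit defining equations for the projective closure on both affine charts.
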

\begin{proof}
We claim that the projective closure of $x''=x^{2}$ in $\Pn{1}$ is simply the set $V=\{(x:y)\in \Pn{1}\mid (x:y)\in \mathbf{V}((x''-x^{2})^{\delta h}) \text{ and } y\neq 0\}$, where the $\delta$-homogenization $(x''-x^{2})^{\delta h}$ of $x''-x^2$ is
\[
y^{2}x'' -2yy'x' -yy''x + 2(y')^{2}x -yx^{2}.
\] 

\noindent That is, the affine $\delta$-variety $x''=x^{2}$ contained in the first chart is already projectively closed. This is counterintuitive because the point at infinity $(1:0)$ satisfies the equation $(x''-x^{2})^{\delta h}=0$. (Such a thing is possible because the $\delta$-projective closure is defined by the $\delta$-homogenization of the entire $\delta$-ideal $[x''-x^2]$, not just the generator. Similar problems arise even in the algebraic case; e.g., Example 3, p. 387 in \cite{clo1}.) 

Elementary topology implies (see, e.g., Lemma 1, p. 49, \cite{shaf}) that $V$ is closed in $\Pn{1}$ if and only if its affine restrictions are closed in the subspace Kolchin topology on both standard affine charts. This is obviously the case for the first chart; set $y=1$, and we get the original equation $x''=x^{2}$. It is not obvious for the second chart because we have insisted on excluding the point $(1:0)$. Indeed, letting $x=1$ in the equation $y^{2}x'' -2yy'x' -yy''x + 2(y')^{2}x -yx^{2}=0$ we obtain $-yy'' +2(y')^{2} -y=0$; it is not clear that we may remove the solution $y=0$ and still have a Kolchin-closed affine set. Nonetheless, we will find $\delta$-polynomials whose common zero locus is precisely the set $-yy'' +2(y')^{2} -y=0\wedge y\neq 0$.

Consider the $\delta$-elimination ideal $I_{t}$ of the differential ideal $I=[-yy'' +2(y')^{2} -y, yt-1]$. Because we are working over $DCF_0$, $I_{t}$ defines the Kolchin closure of $-yy'' +2(y')^{2} -y=0\wedge y\neq 0$ in $\mathbb{A}^1$. We must show that the closure adds no new points. We claim that the affine restriction $-yy'' +2(y')^{2} -y=0\wedge y\neq 0$ is equivalent to the closed condition $-y''y+2(y')^{2}-y=0 \wedge 3(y'')^{2}-1-2y'''y'+2y''=0$.  (To obtain these equations we differentiated $-y''y+2(y')^{2}-y$, treated each derivative as an algebraic indeterminate, and used Maple's {\tt{EliminationIdeal}} command to find generators of $I_{t}$ of up to order 3. The next paragraph shows that the resulting equations justify the claim.) 

Let $p_{1}=3(y'')^{2}-1-2y'''y'+2y'', p_{2}=-y''y+2(y')^{2}-y,$ and $p_{3}=p_2'=3y'y''-yy'''-y'$. Observe that $yp_{1}=(-3y'' +1)p_{2}+ 2y'p_{3}$. This proves that the solutions of the system $-y''y+2(y')^{2}-y=0 \wedge 3(y'')^{2}-1-2y'''y'+2y''=0$ indeed contain the set  $-yy'' +2(y')^{2} -y=0\wedge y\neq 0$. The reverse containment follows from the presence of the term $-1$ in $p_{1}$.

 \end{proof}
\end{section}
\bibliography{simmons_thesis_mining_copy_updated}

\begin{thebibliography}{10}

\bibitem{BlumComplete}
Peter Blum.
\newblock Complete models of differential fields.
\newblock {\em Trans. Amer. Math. Soc.}, 137:309--325, 1969.

\bibitem{BlumExtensions}
Peter Blum.
\newblock Differential specializations.
\newblock {\em Proceedings of the American Mathematical Society}, 24
  (3):471--474, 1970.

\bibitem{clo1}
David Cox, John Little, and Donal O'Shea.
\newblock {\em Ideals, varieties, and algorithms}.
\newblock Undergraduate Texts in Mathematics. Springer, New York, third
  edition, 2007.

\bibitem{DeltaCompleteness}
James Freitag.
\newblock Completeness in partial differential fields.
\newblock {\em Journal of Algebra}, 420:350--372, 2014.

\bibitem{JamesOmarWilliam}
James Freitag, Omar Le\'{o}n~S\'{a}nchez, and William Simmons.
\newblock On linear dependence over complete differential algebraic varieties.
\newblock To appear in Communications in Algebra.

\bibitem{MR0352067}
E.~R. Kolchin.
\newblock Differential equations in a projective space and linear dependence
  over a projective variety.
\newblock In {\em Contributions to analysis (collection of papers dedicated to
  {L}ipman {B}ers)}, pages 195--214. Academic Press, New York, 1974.

\bibitem{marker:msri}
David Marker.
\newblock Model theory of differential fields.
\newblock In {\em Model theory, algebra, and geometry}, volume~39 of {\em Math.
  Sci. Res. Inst. Publ.}, pages 53--63. Cambridge Univ. Press, Cambridge, 2000.

\bibitem{Marker:model_thy_of_fields}
David Marker, Margit Messmer, and Anand Pillay.
\newblock {\em Model theory of fields}, volume~5 of {\em Lecture Notes in
  Logic}.
\newblock Association for Symbolic Logic, La Jolla, CA, second edition, 2006.

\bibitem{MorrisonSD}
S.~D. Morrison.
\newblock Extensions of differential places.
\newblock {\em American Journal of Mathematics}, 100 (2):245--261, 1978.

\bibitem{morrison79}
S.~D. Morrison.
\newblock Differential specializations in integral and algebraic extensions.
\newblock {\em Amer. J. Math.}, 101(6):1381--1399, 1979.

\bibitem{PillayDvar}
Anand Pillay.
\newblock Remarks on algebraic {D}-varieties and the model theory of
  differential fields.
\newblock In {\em Logic in Tehran}, volume~26 of {\em Lec. Notes in Logic},
  pages 256--269. Assoc. Symbol. Logic, La Jolla, CA, 2006.

\bibitem{PongDiffComplete2000}
Wai~Yan Pong.
\newblock Complete sets in differentially closed fields.
\newblock {\em Journal of Algebra}, 224:454--466, 2000.

\bibitem{prestel:val}
Alexander Prestel.
\newblock Positive elimination in valued fields.
\newblock {\em Manuscripta Math.}, 123(1):95--103, 2007.

\bibitem{shaf}
Igor~R. Shafarevich.
\newblock {\em Basic algebraic geometry 1}.
\newblock Springer-Verlag, Berlin, second edition, 1994.
\newblock Varieties in projective space, Translated from the 1988 Russian
  edition with notes by Miles Reid.

\bibitem{SimmonsThesis}
William~D. Simmons.
\newblock {\em Completeness of finite-rank differential varieties}.
\newblock PhD thesis, University of Illinois at Chicago, 2013.

\bibitem{vandenDries}
Lou van~den Dries.
\newblock Some applications of a model theoretic fact to (semi-) algebraic
  geometry.
\newblock {\em Nederl. Akad. Wetensh. Indag. Math}, 44 (4):397--401, 1982.

\end{thebibliography}
\bibliographystyle{plain}

\end{document}